\newcommand{\Cay}{\mathrm{Cay}}
\newcommand{\GL}{\mathrm{GL}}
\newcommand{\Spec}{\mathrm{Spec}}
\newtheorem{theorem}{Theorem}[section]
\newtheorem{lemma}[theorem]{Lemma}
\newtheorem{cor}[theorem]{Corollary}
\theoremstyle{definition}
\numberwithin{equation}{section} 
\def\qed{\hfill$\Box$\vspace{12pt}}
\long\def\delete#1{}
\begin{document}
\title {Integral Cayley graphs over a nonabelian group of order $8n$}

\author{Bei Ye$^{a,b}$,~Xiaogang Liu$^{a,b,c,}$\thanks{Supported by the National Natural Science Foundation of China (No. 12371358) and the Guangdong Basic and Applied
Basic Research Foundation (No. 2023A1515010986).}~$^,$\thanks{ Corresponding author. Email addresses: ybei@mail.nwpu.edu.cn, xiaogliu@nwpu.edu.cn}
\\[2mm]
{\small $^a$School of Mathematics and Statistics,}\\[-0.8ex]
{\small Northwestern Polytechnical University, Xi'an, Shaanxi 710072, P.R.~China}\\
{\small $^b$Research \& Development Institute of Northwestern Polytechnical University in Shenzhen,}\\[-0.8ex]
{\small Shenzhen, Guandong 518063, P.R. China}\\
{\small $^c$Xi'an-Budapest Joint Research Center for Combinatorics,}\\[-0.8ex]
{\small Northwestern Polytechnical University, Xi'an, Shaanxi 710129, P.R. China}\\
}
\date{}

\openup 0.5\jot
\maketitle

\begin{abstract}
A graph is called an integral graph when all eigenvalues of its adjacency matrix are integers. We study which Cayley graphs over a nonabelian group
$$
T_{8n}=\left\langle a,b\mid a^{2n}=b^8=e,a^n=b^4,b^{-1}ab=a^{-1} \right \rangle
$$
are integral graphs. Based on the group representation theory, we first give the irreducible matrix representations and characters of $T_{8n}$. Then we give necessary and sufficient conditions for which Cayley graphs over $T_{8n}$ are integral graphs. As applications, we also characterize some families of connected integral Cayley graphs over $T_{8n}$.

\smallskip

\emph{Keywords:} Integral Cayley graph; Nonabelian group; Boolean algebra

\emph{Mathematics Subject Classification (2010):} 05C50
\end{abstract}

\section{Introduction}

Let $\Gamma$ be a simple undirected graph with $n$ vertices. The \emph{adjacency matrix} of $\Gamma$ is denoted by $A(\Gamma) =\left(a_{ij}\right)_{n \times n}$, where $a_{ij} = 1$ if vertices $i$ and $j$ are adjacent in $\Gamma$, and $a_{ij}=0$ otherwise. The \emph{adjacency spectrum} of $\Gamma$ is the multiset of the eigenvalues of $A(\Gamma)$.  If $\lambda_1,\lambda_2,\ldots,\lambda_r$ are the distinct eigenvalues of $A(\Gamma) $ and $m_1,m_2,\ldots,m_r$ are the corresponding multiplicities, then the adjacency spectrum of $\Gamma$ is denoted by
$$
\Spec (\Gamma) =\left\{[\lambda_1]^{m_1}, [\lambda_2]^{m_2}, \ldots, [\lambda_r]^{m_r}\right\},
$$
where we usually omit $m_i$ if $m_i = 1$ for some $i$.

The graph $\Gamma$ is called an \emph{integral graph}, if all eigenvalues of  $A(\Gamma)$ are integers. The concept of integral graph was originally proposed by Harary and Schwenk  in \cite{Harary}, where they proposed the problem: \textbf{Which  graphs are integral graphs}?	Since then, recognizing and constructing integral graphs have become an important research topic in spectral graph theory, and many families of integral graphs have been characterized \cite{Balinska, Bussemaker, Wang, Csikvari, Watanabe}.
	
Let $G$ be a finite group with the identity element $e$, and $\emptyset\not=S
\subseteq G \backslash \{e\}$ with $S=S^{-1}=\{s^{-1}\mid s\in S\}$ (called an \emph{inverse closed subset}). The Cayley graph $\Gamma = \Cay(G,S)$ is denoted to be the graph whose vertex set is $G$ and two vertices $x,y\in G$ are adjacent if and only if $xy^{-1}\in S$. In 1975, Lov\'asz \cite{Lovasz} discovered that the spectrum of Cayley graph can be calculated by using the group representation theory. In 1979,  Babai \cite{Babai} provided an expression for the spectrum of  $\Cay(G, S)$ by the irreducible characters of the finite group $G$. This is a remarkable achievement in computing the spectra of Cayley graphs for finite groups. However, it is still extremely difficult to determine the spectra of Cayley graphs due to unresolved issues in number theory. Until now, the eigenvalues of Cayley graphs have been investigated extensively, especially for integral Cayley graphs \cite{Abdollahi, CFH, Klotz, CFL1,CFL2,DeVos, Ilic, LHH, So, HL,WLW,  Estelyi}. For more results, please refer to an excellent survey \cite{Liu}.

In this paper, we characterize integral Cayley graphs over the nonabelian group $T_{8n}$, denoted by
 $$
 T_{8n} = \left\langle a,b \mid a^{2n}=b^8=e,a^n=b^4,b^{-1}ab=a^{-1} \right\rangle,
 $$
 which comes from \cite{CI-group}. We first give all the irreducible characters of the group $T_{8n}$ (See Tables \ref{Table1} and \ref{Table2}). Then we obtain a necessary and sufficient condition for $\Cay(T_{8n},S)$ to be integral by using the irreducible characters of $T_{8n}$ (See Theorem \ref{integrality}). We also give a necessary and sufficient condition for $\Cay(T_{8n},S)$ to be integral by using the atoms of the boolean algebra of  cyclic groups (See Theorem \ref{boolean-oddeven}). Finally, we characterize some integral normal Cayley graphs over $T_{8n}$ (See Theorem \ref{normal-integer}). Moreover, we also determine some families of connected integral Cayley graphs over $T_{8n}$ (See Corollaries \ref{cor2}, \ref{cor3} and \ref{cor4}).

\section{Preliminaries}




Let $G$ be a finite group and  $\mathrm{GL}(n,\mathbb{C})$ the \emph{general linear group} of $n \times n$ invertible matrices over the complex field $\mathbb{C}$. A \emph{matrix representation} $\rho$ of $G$ over $\mathbb{C}$ is a group homomorphism
	$$
	\rho : G \to \mathrm{GL}(n, \mathbb{C}),
	$$
where $n$ is called the \emph{degree} of the matrix representation $\rho$.

The \emph{character} $\chi_\rho$ of a matrix representation $\rho$ is the mapping $\chi_\rho : G \to \mathbb{C}$, defined by
	$$
	 \quad \chi_\rho(g) = \mathrm{Tr}(\rho(g))
	$$
	for all $g \in G$, where $\mathrm{Tr}$ denotes the \emph{trace} of the matrix. Here, $\chi_\rho(\mathrm{1}_{G})$ is called the \emph{degree} of the character $\chi_\rho$.

Let $\rho_{1}, \rho_{2}: G \to \mathrm{GL}(n,\mathbb{C})$ be two matrix representations. We say that $\rho_{1}$ and $\rho_{2}$ are \emph{equivalent}, written as $\rho_{1} \cong \rho_{2}$, if there exists a matrix $X \in \mathrm{GL}(n,\mathbb{C})$ such that
$$ X\rho_{1}(g)X^{-1} =\rho_{2}(g)$$
for all $g \in G$. The \emph{direct sum} $\rho_{1} \oplus \rho_{2}$ is defined to be the matrix representation of $G$ given by
$$ (\rho_{1} \oplus  \rho_{2})(g) = \rho_{1}(g)\oplus \rho_{2}(g) = \left(
\begin{array}{cc}
	\rho_{1}(g) & 0 \\
	0 & \rho_{2}(g)
\end{array}
\right)$$
for all $g\in G$.

A matrix representation $\rho$ is \emph{reducible} if $\rho \cong \rho_{1}\oplus \rho_{2}$, otherwise, $\rho$ is \emph{irreducible}. The character $\chi_{\rho}$ is \emph{irreducible} if the matrix representation $\rho$ is irreducible.

\begin{lemma}\emph{(See \cite[Theorem~3.1]{Babai})}\label{spetrum}
Let $G$ be a finite group of order $n$ with the identity element $e$, and $\emptyset\not = S
\subseteq G \backslash \{e\}$ with $S=S^{-1}$. Suppose that the irreducible characters (over $\mathbb{C}$) of $G$ are $\chi_{1},\chi_{2},\dots,\chi_{h}$ with respective degrees $n_{1},n_{2},\dots , n_{h}$. Then the spectrum of $\textup{Cay}(G,S)$ can be arranged as
$$ \textup{Spec}(\textup{Cay}(G,S)) =\{[\lambda_{11}]^{n_{1}}, \dots, [\lambda_{1n_{1}}]^{n_{1}}, \dots, [\lambda_{h1}]^{n_{h}}, \dots,[\lambda_{hn_{h}}]^{n_{h}} \}.$$
Furthermore,
$$ \lambda_{i1}^t+\lambda_{i2}^t+\cdots + \lambda_{in_{i}}^t = \sum_{s_{1},\ldots,s_{t}\in S}{\chi_{i}\left(\prod_{l=1}^{t}s_{l}\right)},
$$
for any positive integer $t$ and $i = 1,2,\dots , h$.
\end{lemma}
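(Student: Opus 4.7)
The plan is to realise $A(\Cay(G,S))$ as a sum of images under the regular representation of $G$ and then exploit the decomposition into irreducibles to diagonalise block-by-block. First I would let $\rho_{\mathrm{reg}}:G\to\GL(|G|,\mathbb{C})$ denote the right regular representation, whose $(x,y)$ entry of $\rho_{\mathrm{reg}}(g)$ is $1$ precisely when $y=xg$. A direct count shows
$$
A(\Cay(G,S))=\sum_{s\in S}\rho_{\mathrm{reg}}(s^{-1})=\sum_{s\in S}\rho_{\mathrm{reg}}(s),
$$
the second equality using $S=S^{-1}$.

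Next I would invoke the standard decomposition of the regular representation,
$$
\rho_{\mathrm{reg}}\cong\bigoplus_{i=1}^{h} n_i\rho_i,
$$
realised by an invertible $X\in\GL(|G|,\mathbb{C})$ that simultaneously block-diagonalises every $\rho_{\mathrm{reg}}(g)$. Setting $M_i:=\sum_{s\in S}\rho_i(s)$ and summing over $s\in S$ transforms $XA(\Cay(G,S))X^{-1}$ into $\bigoplus_{i=1}^{h}(M_i\oplus\cdots\oplus M_i)$, with $n_i$ copies of $M_i$ in the $i$-th block. Choosing each $\rho_i$ to be unitary, the hypothesis $S=S^{-1}$ forces $M_i^{*}=M_i$, so $M_i$ is diagonalisable with real eigenvalues $\lambda_{i1},\dots,\lambda_{in_i}$, and each such eigenvalue contributes multiplicity $n_i$ to $\Spec(A)$. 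This is exactly the spectral listing asserted in the lemma.

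Finally, to obtain the trace identity I would compute $\trace(M_i^t)$ in two ways. On the spectral side one has $\trace(M_i^t)=\sum_{j=1}^{n_i}\lambda_{ij}^t$. On the representation side, multilinear expansion combined with the homomorphism property $\rho_i(s_1)\cdots\rho_i(s_t)=\rho_i(s_1\cdots s_t)$ and $\chi_i=\trace\circ\rho_i$ gives
$$
\trace(M_i^t)=\sum_{s_1,\dots,s_t\in S}\chi_i\!\left(\prod_{l=1}^{t}s_l\right),
$$
which is the desired formula. The only non-elementary ingredient in this route is the decomposition $\rho_{\mathrm{reg}}\cong\bigoplus_i n_i\rho_i$, which I would import as the standard consequence of Maschke's theorem and the orthogonality relations for irreducible characters; once this is in hand, the remainder is routine linear algebra.
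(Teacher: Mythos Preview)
The paper does not prove this lemma at all; it is simply quoted from Babai's paper \cite{Babai} as a black box. Your sketch is correct and is essentially Babai's original argument: express the adjacency matrix in the image of the group algebra under the regular representation, decompose the regular representation as $\bigoplus_i n_i\rho_i$, and read off the eigenvalues and their power sums from the blocks $M_i=\sum_{s\in S}\rho_i(s)$ via $\trace(M_i^t)$.

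One small bookkeeping point: with the paper's convention $x\sim y\iff xy^{-1}\in S$, the matrix you actually want is $\sum_{s\in S}L(s)$ for the \emph{left} regular representation ($L(s)e_x=e_{sx}$), not the right one. Your right-regular formula yields the adjacency matrix for the convention $x\sim y\iff x^{-1}y\in S$. The two Cayley graphs are isomorphic via $g\mapsto g^{-1}$, so the spectrum and all trace identities are unchanged; just align the side of the regular representation with the paper's adjacency convention when you write it up.
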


Let
$$
\{ a^{r},a^{r}b,a^{r}b^{2},a^{r}b^{3}\mid 0\leq r \leq 2n-1\}
$$
denote the set of all elements of the group $T_{8n}$. Then we obtain the following results by  straightforward computations.

\begin{lemma}\label{computations}
For the elements of $T_{8n}$, the following hold:
	\begin{itemize}
		\item[\rm (1)] $a^{r}b=ba^{-r}, ~ a^{r}b^2=b^2a^{r}$;
		\item[\rm (2)] $(a^rb)^{-1}=a^{r+n}b^3,~ (a^r b^2)^{-1}=a^{n-r}b^2,~ (a^{r}b^{3})^{-1}=a^{r+n}b$.
	\end{itemize}
	
\end{lemma}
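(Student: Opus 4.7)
The lemma is purely an identity check inside the finitely presented group $T_{8n}$, so my plan is to reduce everything to repeated application of the defining relations $a^{2n}=b^8=e$, $a^n=b^4$, and $b^{-1}ab=a^{-1}$.

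First, I would rewrite the conjugation relation as $ab=ba^{-1}$ (equivalently $ba=a^{-1}b$). A one-line induction on $r$ then gives $a^rb=ba^{-r}$, which is the first identity in (1). Applying it twice yields
\[
a^rb^2=(a^rb)b=(ba^{-r})b=b(a^{-r}b)=b(ba^{r})=b^2a^r,
\]
so $a$ commutes with $b^2$, establishing the second identity in (1). These two facts will be used freely in the rest of the argument.

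For (2), my plan is to verify each claimed inverse by direct multiplication and simplification, using (1) to push powers of $a$ past powers of $b$, together with $a^n=b^4$ (so $a^{-n}=b^{-4}=b^4$) and $b^8=e=a^{2n}$ to collapse the result to the identity. Concretely:
\[
(a^rb)(a^{r+n}b^3)=a^r(ba^{r+n})b^3=a^r(a^{-(r+n)}b)b^3=a^{-n}b^4=b^{-4}b^4=e,
\]
\[
(a^rb^2)(a^{n-r}b^2)=a^ra^{n-r}b^2b^2=a^nb^4=b^4b^4=b^8=e,
\]
and for the last one I would commute $a^{r+n}$ past $b^3=b\cdot b^2$ by first using (1) to get $ba^{r+n}=a^{-(r+n)}b$ and then using $b^2a^{r+n}=a^{r+n}b^2$, after which the product telescopes to $a^{-n}b^4=e$ in the same way.

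There is no real obstacle here; the only thing to be careful about is bookkeeping the sign of the exponent when moving $a$ past $b$ (versus past $b^2$), and remembering that the exponents of $a$ live modulo $2n$ while $a^n$ is identified with $b^4$. Once those conventions are fixed, every identity in (1) and (2) reduces in at most three elementary steps.
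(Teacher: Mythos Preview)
Your proposal is correct and is exactly what the paper has in mind: the paper states only that the lemma follows ``by straightforward computations'' and gives no further details, and your verification via the relations $ab=ba^{-1}$, $a^n=b^4$, and $b^8=e$ is precisely such a computation. Nothing is missing.
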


\begin{lemma}\label{conjugacy}
	The $2n+6$ conjugacy classes of $T_{8n}$ are
	$$
	\{e\},~ \{a^{n}\},~  \{a^{r},a^{-r}\}~( 1\leq r \leq n-1),~ \{a^{2r}b\mid 0\leq r \leq n-1\},~ \{a^{2r+1}b\mid 0\leq r \leq n-1\},
	$$
	$$
	\{b^2\},~ \{a^{n}b^2\},~ \{a^{r}b^{2},a^{-r}b^{2}\}~( 1\leq r \leq n-1 ),~ \{a^{2r}b^{3}\mid 0\leq r \leq n-1\},~  \{a^{2r+1}b^{3}\mid 0\leq r \leq n-1\}.
	$$
\end{lemma}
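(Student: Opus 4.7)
The plan is to compute the conjugacy class of a generic element $a^r b^t$ with $t \in \{0,1,2,3\}$ using the defining relations, and then collect the resulting classes. The key preliminary observation is that $b^2$ is central: from $b^{-1}ab = a^{-1}$ one derives $b^{-2} a b^2 = b^{-1} a^{-1} b = (b^{-1}ab)^{-1} = a$, so $b^2$ commutes with $a$, and it trivially commutes with $b$. Consequently $\{e,a^n,b^2,a^nb^2\}\subseteq Z(T_{8n})$, which already explains the four singleton classes in the list. A second useful derivation is $bab^{-1}=a^{-1}$ (equivalent to the given $b^{-1}ab=a^{-1}$), so conjugation by $b$ inverts powers of $a$.

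For the non-central elements of $\langle a\rangle$, I would observe that conjugation by any $a^s$ fixes $a^r$, while conjugation by any element of the form $a^s b^t$ sends $a^r$ to $a^r$ when $t$ is even (since $b^2$ is central) and to $a^{-r}$ when $t$ is odd. Hence the class of $a^r$ is exactly $\{a^r,a^{-r}\}$, which has size two for $1\le r\le n-1$. Multiplying through by the central element $b^2$, the same computation gives class $\{a^rb^2,a^{-r}b^2\}$ for $a^rb^2$, with the endpoints $r=0,n$ producing the central singletons already noted.

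For the elements $a^rb$ I would compute the two basic conjugations: using $ba=a^{-1}b$ one gets $a\cdot a^rb\cdot a^{-1} = a^{r+1}(ba^{-1}) = a^{r+2}b$, so conjugation by $a$ shifts the $a$-exponent by $+2$; and $b\cdot a^rb\cdot b^{-1}= ba^r = a^{-r}b$, which, because $2n$ is even, keeps the exponent in the same parity class modulo $2n$. Iterating, the class of $a^rb$ is exactly $\{a^{r+2k}b:0\le k\le n-1\}$, and this single orbit splits naturally into the even-exponent class and the odd-exponent class. The same argument with $b^3$ in place of $b$ (using that $b^2$ is central, so $b^3a^{-1}=b^2(ba^{-1})=b^2(ab)=ab^3$) yields the analogous pair of classes among the $a^rb^3$.

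Finally I would verify closure and enumeration: the classes listed partition $T_{8n}$ because the computations above show every element is conjugate only to those listed with it, and conversely every listed pair is realized by explicit conjugation. Counting gives $2+2(n-1)+2n+2+2(n-1)+2n = 8n$ elements distributed over $1+(n-1)+2+1+(n-1)+2 = 2n+6$ classes, matching the statement. I do not anticipate a real obstacle here; the one subtle point is the parity-preservation modulo $2n$ that prevents the two classes among the $a^rb$ (resp.\ $a^rb^3$) from merging, and this holds precisely because $2n$ is even.
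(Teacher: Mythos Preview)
Your proof is correct and follows exactly the direct-computation approach the paper intends (the paper states this lemma without proof, under the blanket remark that it follows ``by straightforward computations''); your identification of $b^2$ as central and the resulting case analysis on $t\in\{0,1,2,3\}$ is the natural way to carry this out. One small slip: in your final class count you wrote $1+(n-1)+2+1+(n-1)+2$, which sums to $2n+4$; the two singleton blocks $\{e\},\{a^n\}$ and $\{b^2\},\{a^nb^2\}$ each contribute $2$ classes, so the correct tally is $2+(n-1)+2+2+(n-1)+2=2n+6$.
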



\begin{lemma}\emph{(See \cite[Theorem~17.11]{representation})}\label{linear}
Let $G$ be a finite group. Then the number of distinct linear characters of $G$ is equal to $|G/G'|$, where
$$
G'=\left\langle [g,h]\mid g,h\in G, [g,h]=g^{-1}h^{-1}gh \right\rangle.
$$
\end{lemma}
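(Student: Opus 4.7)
The plan is to reduce the counting problem to a statement about the abelianization $G/G'$, using the universal property of the commutator subgroup. A linear character of $G$ is, by definition, a degree-one irreducible character, which is nothing but a group homomorphism $\chi : G \to \mathbb{C}^{*}$, where $\mathbb{C}^{*}$ denotes the multiplicative group of nonzero complex numbers (a one-dimensional representation is equal to its own trace, so the distinction disappears).

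I would first verify that every linear character $\chi$ of $G$ kills $G'$. Since $\mathbb{C}^{*}$ is abelian, for any $g,h \in G$ one computes
$$
\chi([g,h]) = \chi(g)^{-1}\chi(h)^{-1}\chi(g)\chi(h) = 1.
$$
Because $G'$ is generated by commutators, this forces $G' \subseteq \ker \chi$, so $\chi$ descends to a homomorphism $\tilde{\chi} : G/G' \to \mathbb{C}^{*}$ via $\chi = \tilde{\chi} \circ \pi$, where $\pi : G \to G/G'$ is the natural projection. Conversely, any homomorphism $G/G' \to \mathbb{C}^{*}$ lifts to a linear character of $G$ by precomposition with $\pi$, and the two assignments are mutually inverse. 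Hence the linear characters of $G$ are in bijection with those of the abelian quotient $G/G'$.

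It then remains to count the linear characters of the finite abelian group $A := G/G'$. By the structure theorem, $A \cong \mathbb{Z}/d_{1} \times \cdots \times \mathbb{Z}/d_{k}$, and a character of a direct product factors uniquely as a product of characters of the factors. For a cyclic group $\mathbb{Z}/d$, a character is determined by the image of a fixed generator, which can be chosen to be any of the $d$ roots of unity in $\mathbb{C}^{*}$ of order dividing $d$, giving exactly $d$ characters. Multiplying over the factors yields $|A|$ characters, and combining with the previous step gives that the number of linear characters of $G$ equals $|G/G'|$.

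There is no real obstacle in this proof: the whole argument is a textbook application of the universal property of abelianization together with the self-duality of finite abelian groups. The only point requiring any verification is the well-definedness of the descent $\tilde{\chi}$, which is handled by the one-line commutator computation above; every other step is automatic.
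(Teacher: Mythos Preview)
Your argument is correct and is the standard textbook proof of this fact. The paper itself does not supply a proof of this lemma at all: it simply cites \cite[Theorem~17.11]{representation} and moves on, so there is nothing to compare against beyond noting that your proof is exactly the kind of argument one finds in that reference.
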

\begin{lemma}\emph{(See \cite[Corollary~9.3]{representation})}\label{irreducible}
	Let $G$ be a finite group, and $\rho:G\rightarrow \GL(n,\mathbb{C})$ a matrix representation of  $G$. Then $\rho$ is irreducible if and only if every $n\times n$ matrix $A$ which satisfies $ (g\rho)A=A(g\rho)$ for all $g\in G$ has the form $A=\lambda I_{n}$ with $\lambda \in \mathbb{C}$, where $I_n$ is the identity matrix of order $n$.
\end{lemma}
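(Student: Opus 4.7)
The plan is to prove both directions of this biconditional, which is essentially Schur's Lemma for matrix representations over $\mathbb{C}$, together with its easy converse.

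For the forward direction, I would assume $\rho$ is irreducible and let $A$ be any $n\times n$ matrix satisfying $\rho(g)A=A\rho(g)$ for all $g\in G$. Since $\mathbb{C}$ is algebraically closed, $A$ has some eigenvalue $\lambda\in\mathbb{C}$. I would then consider $B=A-\lambda I_n$, which still commutes with every $\rho(g)$ and has nontrivial kernel $V=\ker B$. The key observation is that $V$ is $\rho$-invariant: for $v\in V$ we have $B\rho(g)v=\rho(g)Bv=0$, so $\rho(g)v\in V$. By irreducibility $V$ cannot be a proper nonzero invariant subspace, so $V=\mathbb{C}^n$, forcing $B=0$ and hence $A=\lambda I_n$.

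For the backward direction I would argue by contrapositive: assuming $\rho$ is reducible, I would exhibit a non-scalar commuting matrix. By the paper's definition, $\rho\cong\rho_1\oplus\rho_2$, so there exists $X\in\GL(n,\mathbb{C})$ with $X\rho(g)X^{-1}=\rho_1(g)\oplus\rho_2(g)$ for all $g\in G$, where the blocks have sizes $n_1,n_2\geq 1$. Setting $A=X^{-1}(I_{n_1}\oplus 0_{n_2})X$, a direct block computation shows $\rho(g)A=A\rho(g)$ for every $g$, while $A$ has eigenvalues $0$ and $1$ and so is not a scalar multiple of $I_n$.

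The main delicate point is linking the phrase ``no proper nonzero $\rho$-invariant subspace'' with the paper's definition of reducibility via an explicit equivalence $\rho\cong\rho_1\oplus\rho_2$. This step requires Maschke's theorem, which guarantees that every $\rho$-invariant subspace $V\subsetneq \mathbb{C}^n$ admits a $\rho$-invariant complement when $G$ is finite and the field is $\mathbb{C}$; only then can the presence of an invariant subspace be upgraded from a block upper-triangular form to an actual direct sum decomposition. Once this standard fact is invoked, the remainder of the proof is routine linear algebra.
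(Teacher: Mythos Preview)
The paper does not supply a proof of this lemma; it is quoted verbatim from James--Liebeck \cite[Corollary~9.3]{representation} as a known result. Your argument is the standard proof of Schur's Lemma over~$\mathbb{C}$ together with its converse, and it is correct; in particular, your observation that Maschke's theorem is needed to reconcile the paper's direct-sum definition of reducibility with the usual ``no proper nonzero invariant subspace'' formulation is exactly the point that has to be checked.
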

\begin{lemma}\emph{(See \cite[Theorem~15.3]{representation})}\label{irreducible-number}
Let $G$ be a finite group. Then the number of irreducible characters of $G$ is equal to the number of conjugacy classes of $G$.
\end{lemma}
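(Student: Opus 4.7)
The statement is a classical theorem of representation theory, cited here from \cite{representation}, so my plan is to outline the standard approach via class functions and Schur's orthogonality relations rather than to grind through the underlying orthogonality calculations from scratch.

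First, I would introduce the $\mathbb{C}$-vector space $\mathrm{Cl}(G)$ of \emph{class functions} on $G$, that is, functions $f:G\to\mathbb{C}$ satisfying $f(g^{-1}hg)=f(h)$ for all $g,h\in G$. Since such an $f$ is determined by, and can be freely prescribed on, each conjugacy class, $\mathrm{Cl}(G)$ has dimension equal to the number $k$ of conjugacy classes of $G$. The indicator functions of the conjugacy classes form an obvious basis.

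Second, because the trace is invariant under conjugation, every irreducible character $\chi_i$ lies in $\mathrm{Cl}(G)$. I would equip $\mathrm{Cl}(G)$ with the Hermitian inner product
\[
\langle f_1,f_2\rangle=\frac{1}{|G|}\sum_{g\in G}f_1(g)\overline{f_2(g)},
\]
and invoke Schur's orthogonality relations for matrix coefficients of inequivalent irreducible representations to conclude that $\{\chi_1,\ldots,\chi_h\}$ is an orthonormal set. In particular, the irreducible characters are linearly independent in $\mathrm{Cl}(G)$, which already yields $h\le k$.

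The reverse inequality is the main hurdle. The cleanest route is through the regular representation: decompose the group algebra as $\mathbb{C}[G]\cong\bigoplus_i M_{n_i}(\mathbb{C})$ by Maschke's and Wedderburn's theorems, and read off two bases of its center $Z(\mathbb{C}[G])$. On the one hand, the conjugacy class sums $C_1,\ldots,C_k$ (where $C_j=\sum_{g\in \mathcal{C}_j}g$) form a basis of $Z(\mathbb{C}[G])$, giving $\dim Z(\mathbb{C}[G])=k$. On the other hand, the central primitive idempotents $e_i=\frac{n_i}{|G|}\sum_{g\in G}\chi_i(g^{-1})g$, one for each irreducible character, also form a basis, giving $\dim Z(\mathbb{C}[G])=h$. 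Equating the two dimensions yields $h=k$. The delicate point is verifying that the $e_i$ really are central, orthogonal, primitive idempotents summing to $1$; this in turn depends on the orthogonality relations, so the proof is essentially a bookkeeping exercise once the decomposition of $\mathbb{C}[G]$ is in hand.
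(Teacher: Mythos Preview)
Your outline is correct and follows the standard route (class functions, Schur orthogonality, and the two bases of $Z(\mathbb{C}[G])$). Note, however, that the paper does not supply its own proof of this lemma at all: it is stated as a citation from \cite[Theorem~15.3]{representation} and used as a black box, so there is no proof in the paper to compare against.
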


\begin{lemma}\label{characters-odd}
Let $n$ be  an odd integer. Then the irreducible matrix representations of $T_{8n}$ are $8$ linear representations $\rho_{1j}~ (j= 0, 1, \dots,7 )$, $n-1$ representations $ \rho_{2k}~ (k=1, 2, \dots , n-1 )$ of degree $2$, and  $n-1$ representations $\rho_{3h}~ (h= 1,2,\dots, n-1 )$ of degree $2$, which are shown as follows:
	\begin{itemize}
		\item[\rm (1)] $\rho_{1j}: a \mapsto \left(-1\right)^j, \quad b \mapsto \omega^{j}$;
		\item[\rm (2)] $\rho_{2k}: a \mapsto \left( \begin{array}{cc}
			\varepsilon ^k & 0\\
			0 & \varepsilon ^{-k}
		\end{array} \right), \quad b \mapsto
	\left(\begin{array}{cc}
		0& (-1)^k \\
		1 &0
	\end{array}
		\right)$;
		\item[\rm (3)] $\rho_{3h}: a \mapsto \left( \begin{array}{cc}
			\left(-1\right)^{h+1}\xi^{h} & 0\\
			0& \left(-1\right)^{h+1}\xi^{-h}
		\end{array} \right), \quad b \mapsto
		\left(\begin{array}{cc}
		0	& 1 \\
			\mathbf{i} &0
		\end{array}
		\right)$,
	\end{itemize}
where $\omega = \textup{exp}(\frac{\pi}{4}\mathbf{i})$, $\varepsilon =  \textup{exp}(\frac{2\pi}{n}\mathbf{i})$, $\xi= \textup{exp}(\frac{\pi}{n}\mathbf{i})$ and $\mathbf{i}^2=-1$.
\end{lemma}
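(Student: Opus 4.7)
The plan is to follow the standard four-step recipe for classifying the irreducible complex representations of a finite group: (i) check that each listed map satisfies the four defining relations of $T_{8n}$, (ii) verify irreducibility via Lemma~\ref{irreducible}, (iii) show pairwise inequivalence by evaluating characters, and (iv) conclude completeness from the count $8 + (n-1) + (n-1) = 2n+6$, matching the number of conjugacy classes given in Lemma~\ref{conjugacy} (via Lemma~\ref{irreducible-number}), or equivalently from the dimension identity $\sum n_i^2 = 8 + 4(n-1) + 4(n-1) = 8n = |T_{8n}|$.

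For the linear family $\{\rho_{1j}\}$ I would first compute $T_{8n}'$. From $b^{-1}ab = a^{-1}$ one has $[a,b] = a^{-2}$, so $\langle a^2\rangle \leq T_{8n}'$. In the quotient $T_{8n}/\langle a^2\rangle$ the generators commute, $\bar a^2 = \bar e$, and since $n$ is odd the relation $\bar a^n = \bar b^4$ reduces to $\bar a = \bar b^4$; hence the quotient is cyclic of order $8$, generated by $\bar b$. Thus $T_{8n}' = \langle a^2\rangle$, and Lemma~\ref{linear} yields exactly $8$ linear characters, which lift from $\mathbb{Z}/8\mathbb{Z}$ as $b \mapsto \omega^j$ and $a \mapsto \omega^{4j} = (-1)^j$, matching the formulas for $\rho_{1j}$.

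For the two-dimensional families I would verify the four relations by direct $2\times 2$ computation. The crucial identities are $\rho_{2k}(b)^2 = (-1)^k I$ (so $B^4 = I$, agreeing with $A^n = I$ since $\varepsilon^n = 1$) and $\rho_{3h}(b)^2 = \mathbf{i}I$ (so $B^4 = -I$, agreeing with $A^n = -I$; the latter uses that $n$ is odd to collapse $(-1)^{n(h+1)}\xi^{nh}$ to $(-1)^{h+1}(-1)^h = -1$). The conjugation relation $B^{-1}AB = A^{-1}$ is routine since $B$ is anti-diagonal in both families. For irreducibility I would invoke Lemma~\ref{irreducible}: under the hypotheses $1 \leq k,h \leq n-1$ with $n$ odd, the two diagonal entries of $\rho(a)$ are distinct (as $\gcd(2,n)=1$ rules out $\varepsilon^{2k} = 1$ and $\xi^{2h} = 1$); any matrix commuting with $\rho(a)$ is therefore diagonal, and commuting further with the anti-diagonal $\rho(b)$ forces it to be scalar.

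The main obstacle is step (iii). Linear versus two-dimensional representations are separated by degree, and $\chi_{2k}(b^2) = 2(-1)^k \in \mathbb{R}$ versus $\chi_{3h}(b^2) = 2\mathbf{i}$ separates the two two-dimensional families from each other. Within $\{\rho_{2k}\}_{k=1}^{n-1}$ the pair $(\chi(a), \chi(b^2))$ distinguishes $k$ from $n-k$ because $n$ odd flips the sign of $(-1)^k$ while leaving $2\cos(2\pi k/n)$ unchanged. The most delicate subcase is separating the representations $\{\rho_{3h}\}_{h=1}^{n-1}$ from each other; here I would evaluate characters on the classes $\{a^r,a^{-r}\}$ and $\{a^rb^2,a^{-r}b^2\}$, exploiting the explicit formulas $\chi_{3h}(a^r) = 2(-1)^{r(h+1)}\cos(\pi rh/n)$ and $\chi_{3h}(a^rb^2) = \mathbf{i}\,\chi_{3h}(a^r)$, to verify that distinct $h$'s in the prescribed range produce distinct character vectors. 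Completeness then closes the argument via the count recorded in the first paragraph.
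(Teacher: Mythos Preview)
Your strategy mirrors the paper's almost exactly: verify the defining relations, invoke Lemma~\ref{irreducible} for irreducibility, separate the families by character or kernel data (the paper distinguishes $\{\rho_{2k}\}$ from $\{\rho_{3h}\}$ via $b^4\in\mathrm{Ker}\,\rho_{2k}$ but $b^4\notin\mathrm{Ker}\,\rho_{3h}$, which is equivalent to your observation on $\chi(b^2)$), and close by counting against Lemma~\ref{conjugacy}.

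There is, however, a genuine obstruction in your step~(iii) for the family $\{\rho_{3h}\}$, and it cannot be argued around: the representations $\rho_{3h}$ and $\rho_{3,n-h}$ are \emph{equivalent}. Since $n$ is odd and $\xi^n=-1$, one has $(-1)^{(n-h)+1}\xi^{\pm(n-h)}=(-1)^{h+1}\xi^{\mp h}$, so $\rho_{3,n-h}(a)$ is $\rho_{3h}(a)$ with its two diagonal entries swapped, while $\rho_{3,n-h}(b)=\rho_{3h}(b)$ is independent of $h$; conjugation by $B=\rho_{3h}(b)$ itself then carries one representation to the other. Equivalently, your own formula $\chi_{3h}(a^r)=2(-1)^{r(h+1)}\cos(\pi rh/n)$ satisfies $\chi_{3,n-h}(a^r)=\chi_{3h}(a^r)$ for every $r$, so the character vectors you plan to compare coincide for $h$ and $n-h$. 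Consequently the listed $\rho_{3h}$ yield only $(n-1)/2$ inequivalent classes, the dimension sum drops to $8+4(n-1)+2(n-1)=6n+2<8n$, and the list in the statement is incomplete. The paper's ``similarly'' at this step conceals the same gap; what is missing is the complex-conjugate family (with $\rho(b^2)=-\mathbf{i}I$ rather than $\mathbf{i}I$), which must be supplied before the counting argument can close.
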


\begin{proof}
It is easy to verify that $\rho_{1j}~ (j= 0, 1, \dots,7 )$, $ \rho_{2k}~ (k=1, 2, \dots , n-1 )$ and $\rho_{3h}~ (h= 1,2,\dots, n-1 )$ are all matrix representations of $G$. In the following, we prove that they are irreducible.

Note that
$$
T_{8n}'=\left\langle [g,h] \mid g,h\in T_{8n}, [g,h]=g^{-1}h^{-1}gh\right\rangle = \left \langle a^2\right\rangle
$$
and $ |T_{8n}/T_{8n}'|=8$. Then Lemma \ref{linear} implies that the number of distinct representations of degree $1$ is $8$. Clearly, such $8$ distinct representations are $\rho_{1j}~\left(j= 0,1,\dots,7 \right)$, as shown in $(1)$.

Let
$$
A = \left(\begin{array}{cc}
		\alpha & \beta \\
		\gamma & \delta
	\end{array}\right)
$$
be an invertible matrix over $\mathbb{C}$. Suppose that
$$ \left( \begin{array}{cc}
	\varepsilon ^k &0 \\
	0& \varepsilon ^{-k}
\end{array} \right) \left(\begin{array}{cc}
\alpha & \beta \\
\gamma & \delta
\end{array}\right) =  \left(\begin{array}{cc}
\alpha & \beta \\
\gamma & \delta
\end{array}\right)\left( \begin{array}{cc}
\varepsilon ^k & 0\\
0& \varepsilon ^{-k}
\end{array} \right) ,$$
and
$$ \left(\begin{array}{cc}
	0& (-1)^k \\
	1 &0
\end{array}
\right) \left(\begin{array}{cc}
	\alpha & \beta \\
	\gamma & \delta
\end{array}\right) =  \left(\begin{array}{cc}
	\alpha & \beta \\
	\gamma & \delta
\end{array}\right) \left(\begin{array}{cc}
	0& (-1)^k \\
	1 &0
\end{array}
\right).$$
Then
$$
\alpha = \delta,~ \beta = \gamma = 0 .
$$
Thus, every matrix $A$ which satisfies $(g\rho_{2k})A=A(g\rho_{2k})$ for all $g\in T_{8n}$ has the form $A=\alpha I_{2}$ with $\alpha \in \mathbb{C}$.
Therefore, by Lemma \ref{irreducible}, the representations $\rho_{2k}~(k = 1,2,\dots , n-1)$ for $\varepsilon = \text{exp}(\frac{2\pi}{n}\mathbf{i})$ are irreducible and inequivalent (their characters are distinct).


Similarly, we can verify that $\rho_{3h}~(h= 1,2,\dots, n-1 )$ are irreducible and inequivalent representations. Moreover, $\rho_{2k}$ and $\rho_{3h}$ are not equivalent, since $b^4 \in \mathrm{Ker} \rho_{2k}$ but $b^4 \notin \mathrm{Ker}\rho_{3h}$, where $\mathrm{Ker} \rho = \{g \in T_{8n} \mid \rho(g) = I\}$ and $I$ is the identity matrix.

	
Therefore, by Lemmas \ref{conjugacy} and \ref{irreducible-number}, we have  found all the irreducible matrix representations of $T_{8n}$.
\qed\end{proof}

\begin{lemma}\label{characters-even}
Let $n$ be an even integer. Then the irreducible matrix representations of $T_{8n}$ are shown as follows:
	\begin{itemize}
		\item[\rm (1)] $\rho_{1j}~(j=0,2,4,6): a \mapsto 1, \quad  b \mapsto \omega^{j}$;
		\item[\rm ] $\rho_{1j}~(j =1,3,5,7): a \mapsto -1, \quad b \mapsto \omega^{j-1}$;
		\item[\rm (2)] $\rho_{2k}~(k=2,4,\dots,n-2): a \mapsto \left( \begin{array}{cc}
			\xi ^k & 0\\
		0	& \xi ^{-k}
		\end{array} \right), \quad b \mapsto
		\left(\begin{array}{cc}
		0	& 1 \\
			1 &0
		\end{array}
		\right)$;
		\item[\rm ] $\rho_{2k}~(k=1,3,\dots,n-1): a \mapsto \left( \begin{array}{cc}
			\xi^{k} & 0\\
			0& \xi^{-k}
		\end{array} \right), \quad b \mapsto
		\left(\begin{array}{cc}
		0	& \mathbf{i} \\
			1 &0
		\end{array}
		\right)$;
		\item[\rm (3)]
		$\rho_{3h}~(h=2,4,\dots,n-2): a \mapsto \left( \begin{array}{cc}
			\xi ^{h} & 0\\
		0	& \xi ^{-h}
		\end{array} \right), \quad b \mapsto
		\left(\begin{array}{cc}
		0	& -1 \\
			1 &0
		\end{array}
		\right)$;
		\item[\rm ] $\rho_{3h}~(h=1,3,\dots,n-1): a \mapsto \left( \begin{array}{cc}
			\xi^{h} & 0\\
		0	& \xi^{-h}
		\end{array} \right), \quad b \mapsto
		\left(\begin{array}{cc}
		0	& -\mathbf{i} \\
			1 &0
		\end{array}
		\right)$,
	\end{itemize}
	where $\omega = \textup{exp}({\frac{\pi}{4}\mathbf{i}})$, $\xi= \textup{exp}({\frac{\pi}{n}\mathbf{i}})$ and $\mathbf{i}^2=-1$.
\end{lemma}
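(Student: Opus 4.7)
The plan is to mirror the strategy used in the proof of Lemma \ref{characters-odd}, with case splits now being driven by the parity of $k$ (or $h$) because $n$ is even. First I would verify that each of the listed assignments is actually a matrix representation of $T_{8n}$, i.e., that the proposed images of $a$ and $b$ satisfy the relations $a^{2n}=b^8=e$, $a^n=b^4$, and $b^{-1}ab=a^{-1}$. The delicate relation is $a^n=b^4$: since $n$ is even, $\rho_{1j}(a)^n=1$ automatically, so we need $\omega^{4j}=(-1)^j=1$, which forces $j$ to be even; this accounts for the displayed splitting of the eight linear characters into the two families $a\mapsto 1$ and $a\mapsto -1$. For the two-dimensional assignments, $\rho(a)^n=\diag((-1)^k,(-1)^k)$ equals $I$ when $k$ is even and $-I$ when $k$ is odd, and one checks that the proposed $\rho(b)$ in each of the four families satisfies $\rho(b)^4=(-1)^k I$ (respectively $(-1)^h I$) to match. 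The remaining relations $a^{2n}=b^8=e$ and $b^{-1}ab=a^{-1}$ are then routine matrix multiplications.

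Next I would account for the list. The commutator identity $[a,b]=a^{-2}$ shows $T_{8n}'=\langle a^2\rangle$ (the quotient being abelian because $\bar b^{-1}\bar a\bar b=\bar a^{-1}=\bar a$), hence $|T_{8n}/T_{8n}'|=8$, and Lemma \ref{linear} gives exactly eight linear characters as displayed. For irreducibility of the two-dimensional families, Lemma \ref{irreducible} applies verbatim as in the odd case: a matrix commuting with the diagonal $\rho(a)=\diag(\xi^k,\xi^{-k})$ (with $\xi^k\neq\xi^{-k}$ because $1\le k\le n-1$) must itself be diagonal, and then commuting with the anti-diagonal $\rho(b)$ forces the two diagonal entries to be equal, giving $A=\lambda I_2$.

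For pairwise inequivalence I would compare characters. Within a single family the trace $\chi(a)=\xi^k+\xi^{-k}$ separates different values of $k$ in $\{1,\dots,n-1\}$. To distinguish $\rho_{2k}$ from $\rho_{3h}$ when the parities agree, I would evaluate at $b^2$: in the even-index subfamilies one obtains $\rho_{2k}(b^2)=I$ versus $\rho_{3h}(b^2)=-I$, and in the odd-index subfamilies $\rho_{2k}(b^2)=\mathbf{i}I$ versus $\rho_{3h}(b^2)=-\mathbf{i}I$, so the characters differ. Finally, a sum-of-squares check gives $8\cdot 1+(2n-2)\cdot 4=8n=|T_{8n}|$, and the total count of irreducible representations is $8+2\left(\frac{n}{2}-1\right)+2\cdot\frac{n}{2}=2n+6$, which by Lemma \ref{conjugacy} equals the number of conjugacy classes of $T_{8n}$; Lemma \ref{irreducible-number} then confirms the list is exhaustive. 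The principal obstacle is simply the bookkeeping enforced by the parity split and the corresponding choice of $\pm 1$ versus $\pm\mathbf{i}$ in the off-diagonal of $\rho(b)$, which is dictated precisely by making $\rho(b)^4$ match $\rho(a)^n$ in each case.
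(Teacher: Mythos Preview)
Your proposal is correct and follows exactly the route the paper indicates: the paper's proof of this lemma simply reads ``The proof is similar to that of Lemma \ref{characters-odd}. Hence we omit the details here.'' Your write-up fills in precisely those details---verifying the defining relations (with the key point that the parity split on $k,h$ is forced by matching $\rho(b)^4$ with $\rho(a)^n=(-1)^kI$), invoking Lemma \ref{linear} for the eight linear characters, Lemma \ref{irreducible} (Schur) for irreducibility of the two-dimensional pieces, separating characters to get pairwise inequivalence, and closing with the count against Lemmas \ref{conjugacy} and \ref{irreducible-number}.
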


\begin{proof}
	The proof is similar to that of Lemma \ref{characters-odd}. Hence we omit the details here. \qed
\end{proof}

 By  Lemmas \ref{characters-odd} and \ref{characters-even}, we obtain the character tables of $T_{8n}$, which are given in Table \ref{Table1} for odd $n$ and Table \ref{Table2} for  even $n$.

\begin{table}[H]
	\caption{\textbf{ Character Table of $T_{8n}$ for Odd $n$ }}
	\label{Table1}
	\centering
	\begin{tabular}{c|ccccc}
		\hline
		 & $1$ & $a^r$ & $a^{r}b$ & $a^{r}b^2$ & $a^{r}b^3$  \\
		\hline
		$\chi_{j}$ & \multirow{2}{*}{$1$} & \multirow{2}{*}{$(-1)^{rj}$} & \multirow{2}{*}{$(-1)^{rj}\omega^j$} &  \multirow{2}{*}{$(-1)^{rj}\omega^{2j}$} & \multirow{2}{*}{$(-1)^{rj}\omega^{3j}$} \\
		$(j=0,1,\dots,7)$ & \\
		
		$ \varphi_{k}$ & \multirow{2}{*}{$2$} & \multirow{2}{*}{$\varepsilon^{kr} + \varepsilon^{-kr}$} & \multirow{2}{*}{$0$} & \multirow{2}{*}{$(-1)^{k}(\varepsilon^{kr} + \varepsilon^{-kr})$} & \multirow{2}{*}{$0$} \\
		$(k=1,2,\dots,n-1)$ & \\
		
		$ \psi_{h}$ & \multirow{2}{*}{$2$} & \multirow{2}{*}{$(-1)^{rh+r}(\xi^{hr} + \xi^{-hr})$} & \multirow{2}{*}{$0$} & \multirow{2}{*}{$\mathbf{i}(-1)^{rh+r}(\xi^{hr} + \xi^{-hr})$} & \multirow{2}{*}{$0$} \\
		$(h=1,2,\dots,n-1)$ & \\
		\hline
	\end{tabular}
	
\end{table}
\begin{table}[H]
	\caption{\textbf{ Character Table of $T_{8n}$ for Even $n$ }}
	\label{Table2}
	\centering
	\begin{tabular}{cc|ccccc}
		\hline
 	  &	& $1$ & $a^r$ & $a^{r}b$ & $a^{r}b^2$ & $a^{r}b^3$  \\
		\hline
		\multirow{2}{*}{$\chi_{j}$} & $(j=0,2,4,6)$ & \multirow{2}{*}{1} & 1 & $\omega^j$ &  $\omega^{2j}$ & $\omega^{3j}$ \\
		& $(j =1,3,5,7)$ &  & $(-1)^r$ & $(-1)^r\omega^{j-1}$ &  $(-1)^r\omega^{2(j-1)}$ & $(-1)^r\omega^{3(j-1)}$ \\
		\hline
		\multirow{2}{*}{$\varphi_{k}$} & $(k=2,4,\dots,n-2)$ &\multirow{2}{*}{2} & \multirow{2}{*}{$\xi^{kr}+\xi^{-kr}$}  & \multirow{2}{*}{0} & $\xi^{kr}+\xi^{-kr} $ & \multirow{2}{*}{0} \\
		& $(k=1,3,\dots,n-1)$ &  &  &  & $\mathbf{i}(\xi^{kr}+\xi^{-kr})$ &  \\
		\hline
		\multirow{2}{*}{$ \psi_{h}$} & $(h=2,4,\dots,n-2)$ & \multirow{2}{*}{2} & \multirow{2}{*}{$\xi^{hr} + \xi^{-hr}$} & \multirow{2}{*}{0} & $-(\xi^{hr} + \xi^{-hr})$ & \multirow{2}{*}{0} \\
		& $(h=1,3,\dots,n-1)$ &  &   &   & $-\mathbf{i}(\xi^{hr} + \xi^{-hr})$ &   \\
		\hline
	\end{tabular}
	\end{table}

\begin{lemma}\emph{(See\cite[Lemma~2.1]{Abdollahi})}\label{2n-root}
	Let $\xi=\textup{exp}(\frac{\pi}{n}\mathbf{i})$, where $ \mathbf{i}^2=-1$. Then
	\begin{itemize}
	\item [\rm (1)] $\sum\limits_{j=1}^{2n-1}\xi^j=-1,~\sum\limits_{j=1}^{2n-1}(\xi^j+\xi^{-j})=-2$;
	\item [\rm (2)] If $l$ is even, for $0 \leq l \leq 2n-1$, then $\sum\limits_{j=1}^{n-1}\xi^{lj}=-1 , ~\sum\limits_{j=1}^{n-1}(\xi^{lj}+\xi^{-lj})=-2$.
	\end{itemize}
\end{lemma}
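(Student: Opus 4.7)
The plan is to reduce both identities to the classical geometric-series identity for roots of unity.

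First, for part (1), I would observe that $\xi = \exp(\pi \mathbf{i}/n) = \exp(2\pi \mathbf{i}/(2n))$ is a primitive $2n$-th root of unity, so $\xi \neq 1$ and $\xi^{2n} = 1$. The geometric sum therefore yields
\[
\sum_{j=0}^{2n-1}\xi^{j} = \frac{\xi^{2n}-1}{\xi-1} = 0,
\]
which gives $\sum_{j=1}^{2n-1}\xi^{j} = -1$. For the symmetric sum, I would use the substitution $j \mapsto 2n-j$, which is a bijection on $\{1,2,\dots,2n-1\}$ and satisfies $\xi^{-j} = \xi^{2n-j}$; consequently $\sum_{j=1}^{2n-1}\xi^{-j} = \sum_{j=1}^{2n-1}\xi^{j} = -1$, and adding the two gives $-2$.

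For part (2), the key remark is that when $l$ is even, say $l = 2m$, the quantity $\xi^{l} = \exp(2\pi m \mathbf{i}/n)$ is an $n$-th root of unity. Assuming $0 < l < 2n$ (the relevant range, since the $l=0$ case is trivially $n-1 \neq -1$ and should be excluded), we have $0 < m < n$, so $\xi^{l} \neq 1$. Applying the geometric-series formula to the $n$-th root of unity $\xi^{l}$ gives
\[
\sum_{j=0}^{n-1}\xi^{lj} = \frac{(\xi^{l})^{n}-1}{\xi^{l}-1} = 0,
\]
and hence $\sum_{j=1}^{n-1}\xi^{lj} = -1$. The companion identity $\sum_{j=1}^{n-1}(\xi^{lj}+\xi^{-lj}) = -2$ then follows by the same reindexing trick used in part (1), noting that $\xi^{-l}$ is also a nontrivial $n$-th root of unity under the same hypothesis.

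There is no real obstacle here; the statement is a routine consequence of the fact that the sum of all $d$-th roots of unity vanishes for $d \geq 2$. The only point requiring minor care is checking the exact range of $l$ so that $\xi^{l} \neq 1$ in part (2), which is why the even-$l$ hypothesis is natural: it forces $\xi^{l}$ to land in the subgroup of $n$-th roots of unity, where we can apply the geometric-series identity at length $n$ rather than $2n$.
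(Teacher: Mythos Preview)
Your argument is correct and is the standard one. Note that the paper does not actually prove this lemma: it is quoted verbatim from \cite[Lemma~2.1]{Abdollahi} with no proof supplied, so there is nothing substantive to compare against. Your observation about the $l=0$ edge case is also accurate; the stated range $0 \le l \le 2n-1$ is slightly sloppy, and in the paper's applications only nonzero even $l$ ever arise.
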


By straightforward computations, one can easily verify the following results.

\begin{lemma}\label{n-root}
	Let $\varepsilon=\textup{exp}(\frac{2\pi}{n}\mathbf{i})$, where $ \mathbf{i}^2=-1$. Then
	\begin{itemize}
	\item [\rm (1)] $\sum\limits_{j=1}^{n-1}\varepsilon^j=\sum\limits_{j=1}^{2n-1}\varepsilon^j=-1 , ~\sum\limits_{j=1}^{n-1}(\varepsilon^j+\varepsilon^{-j})=\sum\limits_{j=1}^{2n-1}(\varepsilon^j+\varepsilon^{-j})=-2$;
	\item [\rm (2)] If $l$ is even, for $0 \leq l \leq 2n-1$, then $\sum\limits_{j=1}^{n-1}\varepsilon^{lj}=-1, ~\sum\limits_{j=1}^{n-1}(\varepsilon^{lj}+\varepsilon^{-lj})=-2$.
	\end{itemize}
\end{lemma}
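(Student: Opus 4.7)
The plan is to deduce both parts from the single identity $\sum_{j=0}^{n-1}\varepsilon^{j}=0$, which holds because $\varepsilon=\exp(2\pi\mathbf{i}/n)$ is a primitive $n$-th root of unity (equivalently, the coefficient of $x^{n-1}$ in $x^{n}-1$ vanishes). This is the $n$-th-root analogue of the key observation behind the preceding Lemma~\ref{2n-root}, so the structure of my argument will mirror that lemma almost verbatim.

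For part (1), removing the $j=0$ term from the cyclotomic identity immediately yields $\sum_{j=1}^{n-1}\varepsilon^{j}=-1$. For the range $1\le j\le 2n-1$, I would split the sum at $j=n$ and use the periodicity $\varepsilon^{j+n}=\varepsilon^{j}$ to obtain
\[
\sum_{j=1}^{2n-1}\varepsilon^{j}=\sum_{j=1}^{n-1}\varepsilon^{j}+\varepsilon^{n}+\sum_{j=n+1}^{2n-1}\varepsilon^{j}=(-1)+1+(-1)=-1,
\]
after reindexing the last sum by $j\mapsto j-n$. The companion identities involving $\varepsilon^{j}+\varepsilon^{-j}$ then follow because $j\mapsto n-j$ is an involution of $\{1,\dots,n-1\}$ and $j\mapsto 2n-j$ an involution of $\{1,\dots,2n-1\}$, so each paired sum is exactly twice the corresponding one-sided sum, hence $-2$ in both cases.

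For part (2), I would set $\eta:=\varepsilon^{l}$ and recycle the same argument: whenever $\eta\ne 1$ we still have $\eta^{n}=1$, hence $\sum_{j=0}^{n-1}\eta^{j}=0$, and both displayed identities follow exactly as in part (1), with the paired version handled by the same symmetry $j\mapsto n-j$. The evenness hypothesis on $l$ is not really doing algebraic work for $\varepsilon$ itself — it was essential in Lemma~\ref{2n-root} because there $\xi^{l}=\varepsilon^{l/2}$ — but is retained here for parity with that lemma and for the applications that invoke both jointly. As for the main obstacle, there is essentially none: the authors themselves label this ``straightforward''. The only point worth flagging in the writeup is that the statement tacitly excludes $n\mid l$ (in particular $l=0$), since otherwise $\eta=1$ and $\sum_{j=1}^{n-1}\eta^{j}=n-1\ne -1$.
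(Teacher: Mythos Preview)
Your argument is correct, and there is nothing to compare it against: the paper gives no proof of this lemma, stating only that ``by straightforward computations, one can easily verify the following results.'' Your derivation from $\sum_{j=0}^{n-1}\varepsilon^{j}=0$ together with periodicity and the reindexing $j\mapsto n-j$ (resp.\ $j\mapsto 2n-j$) is exactly what ``straightforward'' means here.

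Your closing remarks are also on target. The caveat about $n\mid l$ (in particular $l=0$) is a genuine omission in the lemma as stated. As for the evenness hypothesis, it is not idle in the paper's intended context: this lemma is only invoked when $n$ is odd (Case~1 of Corollary~\ref{cor2}), and for odd $n$ the condition ``$l$ even with $0<l<2n$'' does force $n\nmid l$, so that part~(2) applies. But you are right that, for $\varepsilon$ alone, the identity holds for every $l$ with $n\nmid l$, even or odd.
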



Let $G$ be a finite group and $\mathcal{F}$ a family of subgroups of $G$. The \emph{Boolean algebra} $B(G)$ generated by $\mathcal{F}$ is the set which elements are obtained by arbitrary finite intersections, unions, and complements of the sets in the family $\mathcal{F}$.


\begin{lemma}\emph{(See\cite{boolean-algebra})}\label{BG}
	Let $G$ be an abelian group with the identity element $e$, and $\emptyset\not=S
\subseteq G \backslash \{e\}$ with $S=S^{-1}$. Then the Cayley graph $\textup{Cay}(G,S)$ is integral if and only if $S\in B(G)$.
\end{lemma}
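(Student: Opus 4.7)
The plan is to prove both directions by combining the abelian-case spectral formula from Lemma~\ref{spetrum} with the natural action of $\textup{Gal}(\mathbb{Q}(\zeta_m)/\mathbb{Q})$ on character values, where $m=\exp(G)$. First I would identify the atoms of $B(G)$: taking $\mathcal{F}$ to be the family of all cyclic subgroups of $G$ (the standard choice in this context), one verifies that the atoms of the resulting Boolean algebra are exactly the sets $[x]=\{y\in G:\langle y\rangle=\langle x\rangle\}$ of generators of each cyclic subgroup, so $S\in B(G)$ is equivalent to $S$ being a disjoint union of such atoms.

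For the sufficiency direction, assume $S=\bigsqcup_i[x_i]$. Since $G$ is abelian, every irreducible character is linear, and by Lemma~\ref{spetrum} the eigenvalues of $\Cay(G,S)$ are the character sums $\lambda_\chi=\sum_{s\in S}\chi(s)$. Splitting $\lambda_\chi$ along atoms reduces the claim to showing that each piece
$$
\sum_{s\in[x]}\chi(s)=\sum_{\gcd(k,\,\textup{ord}(x))=1}\chi(x)^k
$$
is a rational integer. This is a Ramanujan-type sum over the Galois conjugates of $\chi(x)$ in $\mathbb{Q}(\zeta_{\textup{ord}(x)})$, and a short count reduces it to $(\phi(\textup{ord}(x))/\phi(d))\mu(d)\in\mathbb{Z}$, where $d$ is the multiplicative order of $\chi(x)$.

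For the necessity direction, suppose $\Cay(G,S)$ is integral. For each integer $k$ with $\gcd(k,m)=1$, the Galois automorphism $\sigma_k:\zeta_m\mapsto\zeta_m^k$ satisfies $\sigma_k(\chi(g))=\chi(g)^k=\chi(g^k)$ for every character $\chi$ and every $g\in G$, and the map $g\mapsto g^k$ is a bijection of $G$. Writing $S^{(k)}=\{s^k:s\in S\}$ gives
$$
\sigma_k(\lambda_\chi)=\sum_{t\in S^{(k)}}\chi(t).
$$
Integrality forces $\sigma_k(\lambda_\chi)=\lambda_\chi$, so $\sum_{t\in S^{(k)}}\chi(t)=\sum_{s\in S}\chi(s)$ for every character $\chi$ of $G$; Fourier inversion on the abelian group $G$ then promotes this to the set-level identity $S^{(k)}=S$ for every $k$ coprime to $m$. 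To finish, take any $y\in S$ and any $z\in[y]$, write $z=y^j$ with $\gcd(j,\textup{ord}(y))=1$, and use the Chinese Remainder Theorem to lift $j$ to some $k\in\mathbb{Z}$ with $\gcd(k,m)=1$ and $k\equiv j\pmod{\textup{ord}(y)}$. Then $z=y^k\in S^{(k)}=S$, so $[y]\subseteq S$ and $S\in B(G)$.

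The hard part will be the Fourier-inversion step that upgrades equality of all character sums to the set-level equality $S^{(k)}=S$; this is the precise point at which the hypothesis ``$G$ abelian'' is essential, since for non-abelian groups character values only separate conjugacy classes rather than individual elements. A smaller but essential technicality is the CRT lift of $j$: one needs $\gcd(k,\exp(G))=1$ so that $\sigma_k$ corresponds to a genuine permutation of $G$, not merely a permutation of $\langle y\rangle$.
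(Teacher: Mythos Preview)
The paper does not supply its own proof of this lemma; it is quoted verbatim from \cite{boolean-algebra} and used as a black box. So there is nothing in the paper to compare your argument against.

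That said, your outline is correct and is essentially the standard proof (Bridges--Mena, Klotz--Sander, Alperin--Peterson). The identification of the atoms of $B(G)$ with the sets $[x]$, the Ramanujan-sum computation for sufficiency, and the Galois/Fourier argument for necessity are all sound. Two small remarks: in the sufficiency step the sum $\sum_{\gcd(k,\textup{ord}(x))=1}\chi(x)^k$ is literally the Ramanujan sum $c_{\textup{ord}(x)}(r)$ where $\chi(x)=\zeta_{\textup{ord}(x)}^{\,r}$, so your formula $\mu(d)\,\phi(\textup{ord}(x))/\phi(d)$ with $d=\textup{ord}(x)/\gcd(\textup{ord}(x),r)$ is exactly von Sterneck's expression; and in the necessity step the passage from $\sum_{s\in S}\chi(s^k)$ to $\sum_{t\in S^{(k)}}\chi(t)$ does use that $g\mapsto g^k$ is a bijection of $G$ (which you note), since otherwise $S^{(k)}$ would have to be read as a multiset. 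The Fourier-inversion step you flagged as ``hard'' is in fact immediate once one remembers that the characters of an abelian group span $\mathbb{C}^G$, so equality of all character sums forces equality of indicator functions.
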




\begin{lemma}\emph{(See\cite{representation})}\label{circulant}
Let $\mathbb{Z}_{n} = \langle a \rangle$ be a cyclic group of order $n$. Then the irreducible characters of $\mathbb{Z}_{n}$ are $\rho_{k}: a^g \mapsto \varepsilon^{kg} $, where $\varepsilon=\textup{exp}(\frac{2\pi}{n}\mathbf{i})$ and $k,g = 0, 1,\dots, n-1$. Moreover, the Cayley graph $\textup{Cay}(\mathbb{Z}_{n},S)$ (that is circulant graph) has eigenvalues:
$$
\lambda_{\rho_{k}}=\sum\limits_{s\in S}\rho_{k}(s)=\sum\limits_{s\in S}\varepsilon^{ks} \text{~~for~} k= 0,1,\dots,n-1.
$$
\end{lemma}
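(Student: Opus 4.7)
The plan is to establish the two assertions of the lemma in turn: first the classification of the irreducible characters of $\mathbb{Z}_n$, and then the eigenvalue formula for $\Cay(\mathbb{Z}_n,S)$ via Lemma \ref{spetrum}.

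For the first assertion, I would observe that since $\mathbb{Z}_n$ is abelian, every conjugacy class is a singleton, so $\mathbb{Z}_n$ has $n$ conjugacy classes and hence, by Lemma \ref{irreducible-number}, exactly $n$ inequivalent irreducible complex characters. Next I would show that every irreducible representation $\rho$ of $\mathbb{Z}_n$ has degree $1$: if $\rho$ has degree $d$, then $\rho(a)$ commutes with $\rho(a^g)$ for every $g$ (because $\mathbb{Z}_n$ is abelian), so by Lemma \ref{irreducible} we must have $\rho(a)=\lambda I_d$ for some $\lambda\in\mathbb{C}$; but then the image of $\rho$ consists entirely of scalar matrices, every subspace of $\mathbb{C}^d$ is $\rho$-invariant, and irreducibility forces $d=1$. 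A one-dimensional representation is a group homomorphism $\rho:\mathbb{Z}_n\to\mathbb{C}^\times$ determined by $\rho(a)$, and $\rho(a)^n=1$ since $a^n=e$. The $n$ choices $\rho(a)=\varepsilon^{k}$ for $k=0,1,\ldots,n-1$ yield the $n$ distinct homomorphisms $\rho_k:a^g\mapsto\varepsilon^{kg}$, which is the complete list (they are pairwise inequivalent because the values $\varepsilon^k$ at $a$ are distinct).

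For the second assertion, I would apply Lemma \ref{spetrum} to $\Cay(\mathbb{Z}_n,S)$. Since each $\rho_k$ has degree $n_k=1$, that lemma contributes exactly one eigenvalue $\lambda_{k,1}$ to the spectrum per character $\rho_k$, and taking $t=1$ in the trace identity gives
\[
\lambda_{k,1} \;=\; \sum_{s\in S}\chi_{\rho_k}(s) \;=\; \sum_{s\in S}\rho_k(s) \;=\; \sum_{s\in S}\varepsilon^{ks},
\]
where the middle equality uses that for a one-dimensional representation the character coincides with the representation itself.

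No substantial obstacle is anticipated, as the lemma is a standard textbook fact (indeed the paper cites it to \cite{representation}). The only step requiring a moment of care is verifying that every irreducible representation of $\mathbb{Z}_n$ is one-dimensional, but as indicated above this follows directly from Lemma \ref{irreducible} applied with $A=\rho(a)$.
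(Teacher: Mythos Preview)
Your proof is correct. Note, however, that the paper does not actually supply a proof of this lemma: it is stated with a citation to \cite{representation} and used as a standard fact. Your argument is thus not so much an alternative route as a self-contained justification where the paper offers none. The choice to derive one-dimensionality from Lemma~\ref{irreducible} (Schur's lemma applied with $A=\rho(a)$) and to read off the eigenvalues from the $t=1$ case of Lemma~\ref{spetrum} is clean and stays entirely within the preliminaries already assembled in the paper, so it fits well.
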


	Let $G$ be a finite group and $S$ a nonempty subset of $G$. The \emph{Cayley digraph} $\Cay(G, S)$ over $G$ with connection set $S$ is defined to be the digraph whose vertex set is $G$ and $\left(x,y\right)$ is a directed arc if and only if $xy^{-1}\in S$. The Cayley digraph $\Cay(G, S)$ is called \emph{normal} if  $S$ is a union of conjugacy classes of $G$. We say that $S$ is \emph{power-closed} if, for every $x \in S$ and $y \in \langle x  \rangle $ with $ \langle y  \rangle = \langle x  \rangle $, it follows that $ y \in S$.
\begin{lemma}\emph{(See\cite{normal})}\label{digraphs}
	Let $G$ be a finite group and $S$ a union of conjugacy classes of $G$. Then the normal Cayley digraph $\textup{Cay}(G,S)$ is integral if and only if $S$ is power-closed.
\end{lemma}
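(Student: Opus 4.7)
The plan is to combine the character-theoretic formula for the spectrum of a normal Cayley digraph with a Galois-invariance argument. Because $S$ is a union of conjugacy classes, the class sum $\widehat{S}=\sum_{s\in S}s$ is central in the group algebra $\mathbb{C}[G]$, so by Schur's lemma each irreducible representation $\rho$ with character $\chi$ sends $\widehat{S}$ to a scalar matrix; taking traces identifies that scalar as $\lambda_\chi=\chi(1)^{-1}\sum_{s\in S}\chi(s)$. Decomposing the left regular representation of $G$, whose associated operator is precisely $A(\textup{Cay}(G,S))=\sum_{s\in S}\lambda(s)$, shows that the full spectrum of $\textup{Cay}(G,S)$ consists of the numbers $\lambda_\chi$, each with multiplicity $\chi(1)^2$. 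Consequently $\textup{Cay}(G,S)$ is integral if and only if every $\lambda_\chi$ is a rational integer.

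Next I would invoke Galois theory. Let $m$ be the exponent of $G$ and $K=\mathbb{Q}(\zeta_m)$. Each $\lambda_\chi$ lies in $K$ and is an algebraic integer (as an eigenvalue of the integer matrix $A(\textup{Cay}(G,S))$), hence $\lambda_\chi\in\mathbb{Z}$ if and only if $\lambda_\chi$ is fixed by every Galois automorphism $\sigma_t:\zeta_m\mapsto\zeta_m^t$ with $\gcd(t,m)=1$. Since $\chi(g)$ is a sum of eigenvalues of $\rho(g)$, all $|g|$th roots of unity, the standard identity $\sigma_t(\chi(g))=\chi(g^t)$ yields $\sigma_t(\lambda_\chi)=\chi(1)^{-1}\sum_{s\in S}\chi(s^t)$. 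Writing $S^{(t)}=\{s^t:s\in S\}$, which is again a union of conjugacy classes since the $t$th-power map commutes with conjugation, the equality $\sigma_t(\lambda_\chi)=\lambda_\chi$ for every irreducible $\chi$ is equivalent (via character orthogonality applied to the class functions $\mathbf{1}_S$ and $\mathbf{1}_{S^{(t)}}$) to $S=S^{(t)}$. Thus integrality of all $\lambda_\chi$ is equivalent to $S=S^{(t)}$ for every $t$ coprime to $m$.

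Finally I would bridge this condition with power-closedness, which asserts that $s^t\in S$ whenever $s\in S$ and $\gcd(t,|s|)=1$. Since $|s|$ divides $m$, the reduction map $(\mathbb{Z}/m\mathbb{Z})^{\times}\twoheadrightarrow(\mathbb{Z}/|s|\mathbb{Z})^{\times}$ is surjective, so any $t$ coprime to $|s|$ lifts to some $t'$ coprime to $m$ with $s^{t'}=s^t$; conversely, $\gcd(t,m)=1$ immediately implies $\gcd(t,|s|)=1$. Hence the two conditions are equivalent and the proof is complete. The step requiring the most care will be this surjectivity/quantifier-adjustment argument, as the remaining ingredients are standard consequences of character theory and the fact that a rational algebraic integer is an ordinary integer.
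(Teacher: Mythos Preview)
The paper does not supply a proof of this lemma; it is quoted from Godsil and Spiga with no argument given, so there is nothing in the paper itself to compare your proposal against. Your argument is correct and follows the standard route to this result: the centrality of $\widehat{S}$ reduces the spectrum to the scalars $\lambda_\chi=\chi(1)^{-1}\sum_{s\in S}\chi(s)$, and the Galois action $\sigma_t(\chi(g))=\chi(g^t)$ on $\mathbb{Q}(\zeta_m)$ together with the linear independence of irreducible characters shows that all $\lambda_\chi$ are rational integers precisely when $S$ is stable under every $t$th-power map with $\gcd(t,m)=1$, which you then correctly identify with power-closedness via the surjectivity of $(\mathbb{Z}/m\mathbb{Z})^\times\to(\mathbb{Z}/|s|\mathbb{Z})^\times$.

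One small point worth making explicit: when you pass from $\sum_{s\in S}\chi(s^t)$ to the indicator function $\mathbf{1}_{S^{(t)}}$ in the orthogonality step, you are tacitly using that the map $g\mapsto g^t$ is a bijection on $G$ whenever $\gcd(t,m)=1$, so that no element of $S^{(t)}$ is hit more than once. This holds because $g^t=h^t$ forces $\langle g\rangle=\langle g^t\rangle=\langle h^t\rangle=\langle h\rangle$, and the $t$th-power map is injective on any cyclic group of order dividing $m$.
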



\section{A necessary and sufficient condition for integral Cayley graphs over $T_{8n}$}

In this section, we give a necessary and sufficient condition for Cayley graphs over $T_{8n}$ to be integral graphs.

Let $G$ be a group and $\chi$ a character of $G$. Let $A$ and $B$ be two subsets of $G$. For convenience, we define
$$
\chi(\emptyset)=0, \text{ } \chi(A) = \sum_{a \in A}\chi(a), \text{  and  } \chi(AB) = \sum_{a\in A, b \in B}\chi(ab).
$$


%

\begin{theorem}\label{integrality}
Let $S=S_{1}\cup S_{2} \subseteq T_{8n} \backslash \{e\}$ such that $  S = S^{-1}$, where $S_{1} \subseteq \langle a \rangle \cup \langle a  \rangle b^2$ and $ S_{2} \subseteq \langle a  \rangle b \cup  \langle a  \rangle b^3$. Then $\Cay(T_{8n},S)$ is integral if and only if the following conditions hold for $j=0,1,\dots,7$ and $ k,h=1,2,\dots,n-1$:
	\begin{itemize}
	\item [\rm (1)] $\chi_{j}(S)$ is an integer;
	\item [\rm (2)] $\varphi_{k}(S_1)$ and $\varphi_{k}(S_{1}^2) + \varphi_{k}(S_{2}^2)$ are integers;
	\item [\rm (3)] $\triangle _{\varphi_{k}}(S) = 2(\varphi_{k}(S_{1}^2)+\varphi_{k}(S_{2}^2)) - \varphi_{k}^2(S_{1})$ is a perfect square;
	\item [\rm (4)] $\psi_{h}(S_1)$ and $\psi_{h}(S_{1}^2) + \psi_{h}(S_{2}^2)$ are integers;
	\item [\rm (5)] $\triangle _{\psi_{h}}(S) = 2(\psi_{h}(S_{1}^2)+\psi_{h}(S_{2}^2)) - \psi_{h}^2(S_{1})$ is a perfect square.
	\end{itemize}
\end{theorem}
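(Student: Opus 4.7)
The plan is to combine Babai's spectrum formula (Lemma \ref{spetrum}) with the irreducible characters of $T_{8n}$ listed in Tables \ref{Table1} and \ref{Table2}. By Lemma \ref{spetrum}, the spectrum of $\Cay(T_{8n},S)$ decomposes into blocks indexed by the irreducible characters: each linear character $\chi_j$ contributes a single eigenvalue $\chi_j(S)$, whereas each two-dimensional character $\chi\in\{\varphi_k,\psi_h\}$ contributes two eigenvalues $\lambda_1,\lambda_2$ (each with multiplicity $2$) that satisfy $\lambda_1+\lambda_2=\chi(S)$ and $\lambda_1^{2}+\lambda_2^{2}=\chi(S^{2})$, where $S^{2}$ denotes the multiset $\{s_1 s_2:s_1,s_2\in S\}$. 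Integrality of all eigenvalues therefore reduces to $\chi_j(S)\in\mathbb{Z}$ for every $j$ (which is condition (1)) together with integrality of $\lambda_1,\lambda_2$ for every two-dimensional character.

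The next step is a coset decomposition that explains the splitting $S=S_1\cup S_2$. Put $H=\langle a,b^2\rangle=\langle a\rangle\cup\langle a\rangle b^2$; by Lemma \ref{computations}(1), $b^2$ is central, so $H$ is an abelian subgroup of index $2$, with the other coset being $Hb=\langle a\rangle b\cup\langle a\rangle b^3$. Inspection of Tables \ref{Table1} and \ref{Table2} shows that each $\varphi_k$ and each $\psi_h$ vanishes identically on $Hb$, so that $\chi(S)=\chi(S_1)$ for every two-dimensional $\chi$. Using the twisting relation $ba^{r}=a^{-r}b$ from Lemma \ref{computations} together with the centrality of $b^{2}$, one verifies $bH=Hb$, and hence the four pieces of the product multiset satisfy $S_1 S_1,\, S_2 S_2\subseteq H$ and $S_1 S_2,\, S_2 S_1\subseteq Hb$. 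Vanishing of $\chi$ on $Hb$ then gives $\chi(S^{2})=\chi(S_1^{2})+\chi(S_2^{2})$ for every two-dimensional $\chi$.

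Combining the two reductions, the two eigenvalues attached to a two-dimensional $\chi$ are $\tfrac12\bigl(\chi(S_1)\pm\sqrt{\triangle_\chi(S)}\bigr)$, so they lie in $\mathbb{Z}$ precisely when $\chi(S_1)\in\mathbb{Z}$, $\chi(S_1^{2})+\chi(S_2^{2})\in\mathbb{Z}$ and the discriminant $\triangle_\chi(S)$ is a non-negative perfect square. Non-negativity is automatic because the adjacency matrix is real symmetric, and the required parity condition $\chi(S_1)\equiv\sqrt{\triangle_\chi(S)}\pmod 2$ is automatic because $\triangle_\chi(S)=2\bigl(\chi(S_1^{2})+\chi(S_2^{2})\bigr)-\chi(S_1)^{2}\equiv\chi(S_1)\pmod 2$ whenever all quantities involved are integers. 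Taking $\chi=\varphi_k$ yields conditions (2)--(3) and $\chi=\psi_h$ yields conditions (4)--(5), completing the equivalence.

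The main technical obstacle is the coset identity $Hb\cdot Hb\subseteq H$ that underlies $\chi(S^{2})=\chi(S_1^{2})+\chi(S_2^{2})$: unlike the routine inclusions $H\cdot H\subseteq H$ and $H\cdot Hb\subseteq Hb$, it genuinely relies on the twisted conjugation $b^{-1}ab=a^{-1}$ together with $b^{2}\in Z(T_{8n})$. Once this structural point is in place, the rest of the argument is a bookkeeping application of Babai's formula and the elementary analysis of a real quadratic with integer coefficients.
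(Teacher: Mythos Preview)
Your proposal is correct and follows essentially the same approach as the paper: both use Babai's formula, the vanishing of the two-dimensional characters on $\langle a\rangle b\cup\langle a\rangle b^{3}$, and the resulting quadratic whose roots are the eigenvalues. Your framing via the index-$2$ subgroup $H=\langle a,b^{2}\rangle$ and the explicit parity check $\triangle_\chi(S)\equiv\chi(S_1)^2\pmod 2$ are cosmetic improvements over the paper's direct set inclusions and its implicit appeal to rational algebraic integers being integers, but the underlying argument is the same.
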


\begin{proof}
By Lemma \ref{computations}, we have
$$
S_{1}S_{2}=\left\{s_{1}s_{2}\mid s_{1}\in S_{1}, s_{2}\in S_{2}\right\} \subseteq \langle a \rangle b \cup \langle a \rangle b^3,
$$
$$
S_{2}S_{1}=\left\{s_{2}s_{1}\mid s_{2}\in S_{2}, s_{1}\in S_{1}\right\} \subseteq \langle a \rangle b \cup \langle a  \rangle b^3,
$$
$$
S_{1}^2=\left\{s_{1}s_{2}\mid s_{1},s_{2}\in S_{1}\right\} \subseteq  \langle a \rangle \cup \langle a \rangle b^2,
$$
$$
S_{2}^2=\left\{s_{1}s_{2}\mid s_{1},s_{2}\in S_{2}\right\} \subseteq  \langle a \rangle \cup  \langle a  \rangle b^2.
$$
Then by Tables \ref{Table1} and \ref{Table2}, we have
	$$
	\sum\limits_{s_{2}\in S_{2}}\varphi_{k}(s_{2}) = \sum\limits_{s\in S_{1},t\in S_{2}}\varphi_{k}(st) =\sum\limits_{s\in S_{2},t\in S_{1}}\varphi_{k}(st) = 0,
	$$
	$$
	\sum\limits_{s_{2}\in S_{2}}\psi_{h}(s_{2}) = \sum\limits_{s\in S_{1},t\in S_{2}}\psi_{h}(st)= \sum\limits_{s\in S_{2},t\in S_{1}}\psi_{h}(st) = 0,
	$$
i.e.,
	$$ \varphi_{k}(S_{2}) = \varphi_{k}(S_{1}S_{2}) = \varphi_{k}(S_{2}S_{1}) = \psi_{h}(S_{2}) = \psi_{h}(S_{1}S_{2}) = \psi_{h}(S_{2}S_{1}) = 0.$$
 	Thus,
	$$
	\varphi_{k}(S) =  \sum\limits_{s_{1}\in S_{1}}\varphi_{k}(s_{1}) +\sum\limits_{s_{2}\in S_{2}}\varphi_{k}(s_{2}) = \sum\limits_{s_{1}\in S_{1}}\varphi_{k}(s_{1}) = \varphi_{k}(S_{1}),
	$$
	\begin{align*}
	\varphi_{k}(S^2) &= \sum\limits_{s,t\in S}\varphi_{k}(st)= \varphi_{k}(S_{1}^2) + \varphi_{k}(S_{1}S_{2}) + \varphi_{k}(S_{2}S_{1}) +\varphi_{k}(S_{2}^2) \\
	&= \varphi_{k}(S_{1}^2) + \varphi_{k}(S_{2}^2).
    \end{align*}
    Similarly, we have
	$$
	\psi_{h}(S) =  \psi_{h}(S_{1}),
	$$
	$$
	\psi_{h}(S^2)= \psi_{h}(S_{1}^2) + \psi_{h}(S_{2}^2).
	$$
By Lemma \ref{spetrum},  the eigenvalues of $\Cay(T_{8n}, S)$ satisfy the following equations:
	\begin{equation}\label{jkh}
		\begin{cases}
			\lambda_{j} = \chi_{j}(S),  &  j = 0,1,\dots, 7;  \\
			\mu_{k1} + \mu_{k2} = \varphi_{k}(S_{1}), &  k = 1, 2, \dots , n-1 ;\\
			\mu_{k1}^2 + \mu_{k2}^2=\varphi_{k}(S_{1}^2) + \varphi_{k}(S_{2}^2), &  k = 1, 2, \dots , n-1 ;\\
			\eta_{h1} + \eta_{h2}=\psi_{h}(S_{1}),  & h = 1, 2, \dots , n-1 ;\\
			\eta_{h1}^2 + \eta_{h2}^2=\psi_{h}(S_{1}^2) + \psi_{h}(S_{2}^2),  &  h = 1, 2, \dots , n-1 .\\
		\end{cases}
	\end{equation}
Suppose that $\Cay(T_{8n},S)$ is integral, that is all eigenvalues of  $\Cay(T_{8n},S)$ are integers. By \eqref{jkh}, $\chi_{j}(S)$, $\varphi_{k}(S_{1})$, $ \varphi_{k}(S_{1}^2) + \varphi_{k}(S_{2}^2)$, $\psi_{h}(S_{1})$ and $\psi_{h}(S_{1}^2) + \psi_{h}(S_{2}^2)$ must be integers, so (1), (2) and (4) hold.

Note that $\mu_{k1}$ and $\mu_{k2}$ are two integral roots of the equation
$$
x^2 - (\mu_{k1} + \mu_{k2})x + \mu_{k1}\mu_{k2}  = 0,
$$
which is equivalent to
\begin{equation}\label{quadratic}
	x^2 -  \varphi_{k}(S_{1}) x + \frac{1}{2}\left(\varphi_{k}^2(S_{1})-\left(\varphi_{k}(S_{1}^2) + \varphi_{k}(S_{2}^2)\right)\right) = 0.
\end{equation}
 Thus, the discriminant $\triangle _{\varphi_{k}}(S) = 2(\varphi_{k}(S_{1}^2)+\varphi_{k}(S_{2}^2)) - \varphi_{k}^2(S_{1})$ must be a perfect square, and then (3) holds.

Similarly, one can easily verify (5) by using that $\eta_{k1}$ and $\eta_{k2}$ are two integral roots of the equation
$$
x^2 - (\eta_{k1} +\eta_{k2})x + \eta_{k1}\eta_{k2}  = 0.
$$

On the contrary, suppose that (1)--(5) hold. Clearly, by \eqref{jkh}, $\lambda_{j}$ is an integer. Note that the solutions $\mu_{k1}$ and $\mu_{k2}$ of \eqref{quadratic} are rational.  Thus, $\mu_{k1} $ and $\mu_{k2}$ must be  integers. Similarly, $\eta_{h1}$ and $\eta_{h2}$ are integers. Hence, $\Cay(T_{8n}, S)$ is integral.
\qed\end{proof}

\begin{cor}\label{cor2}
	Let $S=S_{1}\cup S_{2}\subseteq T_{8n} \backslash \{e\}$ such that $S = S^{-1}$, where $S_{1} = \langle a^2 \rangle\backslash\{e\} \cup \langle a  \rangle b^2 $ and $ S_{2} = \langle a  \rangle b \cup \langle a \rangle b^3$. Then $\Cay(T_{8n}, S)$ is a connected integral graph, whose spectrum is
$$\Spec (\Cay(T_{8n}, S)) = \left\{[7n-1], [-n-1]^3, [n-1]^4, [-1]^{8(n-1)}\right\}.$$
\end{cor}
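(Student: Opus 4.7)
The plan is to verify the hypotheses and then apply Theorem \ref{integrality} together with Lemma \ref{spetrum}, splitting contributions into those from the eight linear characters and those from the $2(n-1)$ pairs of degree-two characters $\varphi_{k}, \psi_{h}$ of $T_{8n}$.

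First I would dispose of the set-theoretic and combinatorial preliminaries. By Lemma \ref{computations}, $(a^{r}b^{2})^{-1}=a^{n-r}b^{2}$ sends $\langle a\rangle b^{2}$ to itself while $(a^{r}b)^{-1}=a^{r+n}b^{3}$ swaps $\langle a\rangle b$ with $\langle a\rangle b^{3}$, so $S=S^{-1}$; the valency is $|S|=(n-1)+2n+2n+2n=7n-1$; and $\langle S\rangle =T_{8n}$ since $b, ab\in S_{2}$ combined with the relation $b^{-1}ab=a^{-1}$ give $a\in\langle S\rangle$, whence $T_{8n}=\langle a,b\rangle \subseteq \langle S\rangle$, yielding connectedness.

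For each linear character $\chi_{j}$, the associated eigenvalue is $\chi_{j}(S)$. Using $\chi_{j}(\langle a^{2}\rangle \setminus\{e\})=n-1$ (the values on $a^{2r}$ are always $1$) together with $\sum_{r=0}^{2n-1}(-1)^{rj}=2n$ or $0$ according as $j$ is even or odd, I would obtain $\chi_{0}(S)=7n-1$, then $\chi_{j}(S)=n-1+2n(\omega^{j}+\omega^{2j}+\omega^{3j})=-n-1$ for $j\in\{2,4,6\}$ (since the parenthetical sum equals $-1$), and $\chi_{j}(S)=n-1$ for odd $j$ (the $\langle a\rangle b^{i}$ contributions vanish). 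The even-$n$ case in Table \ref{Table2} gives the same conclusion because the extra factor $(-1)^{r}$ attached to odd $j$ still causes $\sum_{r=0}^{2n-1}(-1)^{r}=0$. This accounts for the eigenvalues $[7n-1], [-n-1]^{3}, [n-1]^{4}$.

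The principal obstacle is the degree-two contribution. Tables \ref{Table1} and \ref{Table2} show that $\varphi_{k}$ and $\psi_{h}$ both vanish on $\langle a\rangle b\cup \langle a\rangle b^{3}$, so $\varphi_{k}(S)=\varphi_{k}(S_{1})$ and $\psi_{h}(S)=\psi_{h}(S_{1})$. Using Lemmas \ref{2n-root} and \ref{n-root}, I expect $\varphi_{k}(S_{1})=\psi_{h}(S_{1})=-2$: the parity-dependent factors $(-1)^{k}$ and $(-1)^{rh+r}$ combine with summation over $\langle a\rangle b^{2}$ to kill that portion, and only $\sum_{r=1}^{n-1}(\varepsilon^{2kr}+\varepsilon^{-2kr})=-2$ (or its $\xi$-analogue) survives from $\langle a^{2}\rangle \setminus\{e\}$. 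A parallel expansion via Lemma \ref{spetrum} with $t=2$ should give $\varphi_{k}(S^{2})=\psi_{h}(S^{2})=2$, so the quadratic \eqref{quadratic} becomes $x^{2}+2x+1=0$ with double root $-1$. Each $\varphi_{k}$ (resp.\ $\psi_{h}$) then contributes four copies of $-1$ (two from each root, counted with degree-multiplicity $2$), yielding $4(n-1)+4(n-1)=8(n-1)$ copies of $-1$ overall. Combined with the linear-character contributions, this is precisely the claimed spectrum, and integrality follows directly from Theorem \ref{integrality}. The bulk of the bookkeeping lies in the case analysis on the parities of $k, h, n$ when verifying $\varphi_{k}(S^{2})=\psi_{h}(S^{2})=2$, but each individual sum reduces to an instance of Lemma \ref{2n-root} or Lemma \ref{n-root}.
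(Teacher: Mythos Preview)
Your proposal is correct and follows essentially the same route as the paper: both verify connectedness, evaluate $\chi_{j}(S)$ directly from the character tables, compute $\varphi_{k}(S_{1})=\psi_{h}(S_{1})=-2$ and $\varphi_{k}(S_{1}^{2})+\varphi_{k}(S_{2}^{2})=\psi_{h}(S_{1}^{2})+\psi_{h}(S_{2}^{2})=2$ via Lemmas \ref{2n-root} and \ref{n-root} with a case split on the parity of $n$, and then read off the double root $-1$ from the quadratic \eqref{quadratic} to obtain the $[-1]^{8(n-1)}$ block. The only cosmetic difference is that the paper tracks $\varphi_{k}(S_{1}^{2})=2$ and $\varphi_{k}(S_{2}^{2})=0$ separately (equations \eqref{S2square-odd}--\eqref{S2square-even}), whereas you work with their sum $\varphi_{k}(S^{2})$ directly.
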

\begin{proof}
Note that $\left\langle S \right\rangle = T_{8n}$. Then $\Cay(T_{8n},S)$ is connected. In the following, we verify that $\Cay(T_{8n},S)$ is integral by Tables \ref{Table1}, \ref{Table2} and Theorem \ref{integrality}. Notice that
	\begin{align}\label{d1j}
		\chi_{j}(S)  = \chi_{j}(S_{1}) + \chi_{j}(S_{2}) = \sum\limits_{r=1}^{n-1}\chi_{j}(a^{2r}) + \sum\limits_{l=0}^{2n-1}\chi_{j}(a^{l}b^2) + \sum\limits_{l=0}^{2n-1}\chi_{j}(a^{l}b) +\sum\limits_{l=0}^{2n-1}\chi_{j}(a^{l}b^3) ,
	\end{align}
	\begin{align}\label{d2k1}
 \varphi_{k}(S_{1})
		= \sum\limits_{r=1}^{n-1}\varphi_{k}(a^{2r}) + \sum\limits_{l=0}^{2n-1}\varphi_{k}(a^{l}b^2),
	\end{align}
\begin{align}\label{S1square}
\varphi_{k}(S_{1}^2)	=\sum\limits_{r_{1},r_{2}=1}^{n-1}\varphi_{k}(a^{2r_{1}}a^{2r_{2}}) + \sum\limits_{l_{1},l_{2}=0}^{2n-1}\varphi_{k}(a^{l_{1}}b^2a^{l_{2}}b^2) + \sum\limits_{r=1}^{n-1}\sum\limits_{l=0}^{2n-1}\varphi_{k}(a^{2r}a^{l}b^2) +\sum\limits_{l=0}^{2n-1}\sum\limits_{r=1}^{n-1}\varphi_{k}(a^{l}b^2a^{2r}),
\end{align}
\begin{align}\label{S2square}
\varphi_{k}(S_{2}^2)=	\sum\limits_{l_{1},l_{2}=0}^{2n-1}\varphi_{k}(a^{l_{1}}ba^{l_{2}}b) + \sum\limits_{l_{1},l_{2}=0}^{2n-1}\varphi_{k}(a^{l_{1}}b^3a^{l_{2}}b^3) + \sum\limits_{l_{1},l_{2}=0}^{2n-1}\varphi_{k}(a^{l_{1}}ba^{l_{2}}b^3) +\sum\limits_{l_{1},l_{2}=0}^{n-1}\varphi_{k}(a^{l_{1}}b^3a^{l_{2}}b).
\end{align}
Then, we  consider the following two cases.

\noindent \emph{Case 1.} $n$ is odd. In this case,  by Lemma \ref{n-root}, we have
		\begin{align*}
		\chi_{j}(S)
			& = \sum\limits_{r=1}^{n-1}(-1)^{2rj} + \sum\limits_{l=0}^{2n-1}(-1)^{lj}\omega^{2j} + \sum\limits_{l=0}^{2n-1}(-1)^{lj}\omega^{j} + \sum\limits_{l=0}^{2n-1}(-1)^{lj}\omega^{3j} 	\\
			& = \left\{\begin{array}{ll}
              n-1, &  j = 1,3,5,7, \\[0.1cm]
				-n-1, & j = 2,4,6, \\[0.1cm]
				7n-1, & j = 0,
			   \end{array}\right.\\[0.1cm]
 \varphi_{k}(S_{1}) & = \sum\limits_{r=1}^{n-1}(\varepsilon^{2rk}+\varepsilon^{-2rk}) + \sum\limits_{l=0}^{2n-1}(-1)^k(\varepsilon^{lk} + \varepsilon^{lk})  \\
		& = 2 \mathrm{Re} \left(\sum\limits_{r=1}^{n-1}\varepsilon^{2rk}\right) + 2(-1)^k\mathrm{Re}\left(\sum\limits_{l=0}^{2n-1}\varepsilon^{lk}\right) \\
		& = -2 + 0 \\
		& =-2,
	\end{align*}
where $\mathrm{Re}(x)$ represents the real part of the complex number $x$,
\begin{align*}
	\varphi_{k}(S_{1}^2) & =\sum\limits_{r_{1},r_{2}=1}^{n-1}(\varepsilon^{2(r_{1}+r_{2})k}+\varepsilon^{-2(r_{1}+r_{2})k})+\sum\limits_{l_{1},l_{2}=0}^{2n-1}(\varepsilon^{(l_{1}+l_{2}+n)k}+\varepsilon^{-(l_{1}+l_{2}+n)k}) \\ & \quad +2\sum\limits_{r=1}^{n-1}\sum\limits_{l=0}^{2n-1}(-1)^{k}(\varepsilon^{(2r+l)k}+\varepsilon^{-(2r+l)k})  \\
  & = 2\mathrm{Re}(\sum\limits_{r=1}^{n-1}\varepsilon^{2rk})^2+2\mathrm{Re}(\sum\limits_{l=0}^{2n-1}\varepsilon^{lk})^2+2(-1)^k\mathrm{Re}(\sum\limits_{r=1}^{n-1}\varepsilon^{2rk}\sum\limits_{l=0}^{2n-1}\varepsilon^{lk}) \\
  & = 2,
\end{align*}
\begin{align}
	\varphi_{k}(S_{2}^2) & =\sum\limits_{l_{1},l_{2}=0}^{2n-1}(-1)^k(\varepsilon^{(l_{1}-l_{2})k}+\varepsilon^{-(l_{1}-l_{2})k}) + \sum\limits_{l_{1},l_{2}=0}^{2n-1}(-1)^k(\varepsilon^{(l_{1}-l_{2}+n)k}+\varepsilon^{-(l_{1}-l_{2}+n)k}) \nonumber \\ & \quad +2\sum\limits_{l_{1},l_{2}=0}^{2n-1}(\varepsilon^{(l_{1}-l_{2}+n)k}+\varepsilon^{-(l_{1}-l_{2}+n)k}) \nonumber \\
	& = 4(-1)^k\mathrm{Re}(\sum\limits_{l_{1},l_{2}=0}^{2n-1}\varepsilon^{(l_{1}-l_{2})k}) + 4\mathrm{Re}(\sum\limits_{l_{1},l_{2}=0}^{2n-1}\varepsilon^{(l_{1}-l_{2})k})\nonumber \\
\label{S2square-odd} 	& = 0.
\end{align}
Then $\varphi_{k}(S_{1}^2)+\varphi_{k}(S_{2}^2) = 2$ and
 $\triangle _{\varphi_{k}}(S) = 2(\varphi_{k}(S_{1}^2)+\varphi_{k}(S_{2}^2)) - \varphi_{k}^2(S_{1}) = 0 $, which is a perfect square.

Similarly, one can easily verify that
$$ \psi_{h}(S_1) = -2,\quad  \psi_{h}(S_{1}^2) + \psi_{h}(S_{2}^2) = 2,\quad  \triangle _{\psi_{h}}(S) = 0.$$
Therefore, by Theorem \ref{integrality}, $\Cay(T_{8n}, S)$ is an integral graph if $n$ is odd.

\noindent \emph{Case 2.} $n$ is even. In this case,  by Lemma \ref{2n-root}, we have
\begin{align*}
\chi_{j}(S)  & = \begin{cases}
		\sum\limits_{r=1}^{n-1}1 + \sum\limits_{l=0}^{2n-1}\omega^{2j} + \sum\limits_{l=0}^{2n-1}\omega^{j} + \sum\limits_{l=0}^{2n-1}\omega^{3j}, & j = 0,2,4,6, 	\\
		\sum\limits_{r=1}^{n-1}(-1)^{2r}+ \sum\limits_{l=0}^{2n-1}(-1)^l\omega^{2(j-1)} + \sum\limits_{l=0}^{2n-1}(-1)^l\omega^{j-1} + \sum\limits_{l=0}^{2n-1}(-1)^l\omega^{3(j-1)},  & j = 1,3,5,7,
	\end{cases}  \\
& = \begin{cases}
	n-1, &  j = 1,3,5,7 ,\\
	-n-1, & j = 2,4,6 ,\\
	7n-1, & j = 0,
\end{cases}\\
	 \varphi_{k}(S_{1})  & = \begin{cases}
		\sum\limits_{r=1}^{n-1}(\xi^{2rk}+\xi^{-2rk}) + \sum\limits_{l=0}^{2n-1}(\xi^{lk} + \xi^{lk}), &  k = 2,4,\dots , n-2, \\
		\sum\limits_{r=1}^{n-1}(\xi^{2rk}+\xi^{-2rk}) + \sum\limits_{l=0}^{2n-1}\mathbf{i}(\xi^{lk} + \xi^{lk}), &  k = 1,3,\dots, n-1,
	\end{cases} \\
& = \begin{cases}
 	-2,	&  k = 2,4,\dots, n-2, \\
 	-2, &  k = 1,3,\dots, n-1,
\end{cases}
\end{align*}
\begin{align*}
	& \varphi_{k}(S_{1}^2) \\
	& = \begin{cases}
		2\mathrm{Re}\sum\limits_{r_{1},r_{2}=1}^{n-1}(\xi^{2(r_{1}+r_{2})k}) + 2\mathrm{Re}\sum\limits_{l_{1},l_{2}=0}^{2n-1}(\xi^{(l_{1}+l_{2}+n)k}) + 4\mathrm{Re}\sum\limits_{r=1}^{n-1}\sum\limits_{l=0}^{2n-1}(\xi^{(2r+l)k}), &  k = 2,4,\dots,n-2, \\
		2\mathrm{Re}\sum\limits_{r_{1},r_{2}=1}^{n-1}(\xi^{2(r_{1}+r_{2})k}) + 2\mathrm{Re}\sum\limits_{l_{1},l_{2}=0}^{2n-1}(\xi^{(l_{1}+l_{2}+n)k}) + 4\mathrm{Re}\sum\limits_{r=1}^{n-1}\sum\limits_{l=0}^{2n-1}\mathbf{i}(\xi^{(2r+l)k}), & k = 1,3,\dots,n-1,
	\end{cases}   \\
&= \begin{cases}
	2, & k = 2,4,\dots,n-2 ,\\
	2, & k = 1,3,\dots,n-1 ,
\end{cases}
\end{align*}
\begin{align}
	\varphi_{k}(S_{2}^2) &= \begin{cases}
		2\mathrm{Re}\sum\limits_{l_{1},l_{2}=0}^{2n-1}(\xi^{(l_{1}-l_{2})k}) + 6\mathrm{Re}\sum\limits_{l_{1},l_{2}=0}^{2n-1}(\xi^{(l_{1}-l_{2}+n)k}), &  k = 2,4,\dots,n-2, \\
		2\mathrm{Re}\sum\limits_{l_{1},l_{2}=0}^{2n-1}\mathbf{i}(\xi^{(l_{1}-l_{2})k}) + (2\mathbf{i}+4)\mathrm{Re}\sum\limits_{l_{1},l_{2}=0}^{2n-1}(\xi^{(l_{1}-l_{2}+n)k}), & k = 1,3,\dots,n-1,
	\end{cases} \nonumber \\
\label{S2square-even} &= \begin{cases}
	0, & k = 2,4,\dots,n-2, \\
	0, & k = 1,3,\dots,n-1.
\end{cases}
\end{align}
Then
\begin{align*}
\varphi_{k}(S_{1}^2) + \varphi_{k}(S_{2}^2)  = \begin{cases}
	2, & k = 2,4,\dots,n-2, \\
	2, & k = 1,3,\dots,n-1,
\end{cases}
\end{align*}
and
 $\triangle _{\varphi_{k}}(S) = 2(\varphi_{k}(S_{1}^2)+\varphi_{k}(S_{2}^2)) - \varphi_{k}^2(S_{1}) = 0 $, which is a perfect square.

Similarly, one can easily verify that
\begin{align*}
	\psi_{h}(S_1) & = \begin{cases}
		-2,	&  h = 2,4,\dots , n-2, \\
		-2, &  h = 1,3,\dots , n-1,
	\end{cases}\\
 	\psi_{h}(S_{1}^2) + \psi_{h}(S_{2}^2)
	& = \begin{cases}
		2, & h = 2,4,\dots,n-2 ,\\
		2, & h = 1,3,\dots,n-1,
	\end{cases}
\end{align*}
 and $ \triangle _{\psi_{h}}(S) =2(\psi_{h}(S_{1}^2)+\psi_{h}(S_{2}^2)) - \psi_{h}^2(S_{1})= 0$, which is a perfect square. Therefore, by Theorem \ref{integrality}, $\Cay(T_{8n}, S)$ is an integral graph if $n$ is even.

Finally, by \eqref{jkh}, we can determine the spectrum of $\Cay(T_{8n}, S)$ immediately.
\qed\end{proof}

\begin{cor}
	For each positive integer $n$, there is a connected $(7n-1)$-regular integral graph with $8n$ vertices.
	
\end{cor}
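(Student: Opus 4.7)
The plan is to obtain this corollary as an immediate consequence of Corollary \ref{cor2}. I would first note that any Cayley graph $\Cay(G,S)$ is $|S|$-regular, so it suffices to exhibit a connection set $S \subseteq T_{8n} \setminus \{e\}$ of size $7n-1$ with $S = S^{-1}$ yielding a connected integral Cayley graph on the $8n$ vertices of $T_{8n}$.

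The connection set furnished by Corollary \ref{cor2} is exactly $S = S_1 \cup S_2$ with $S_1 = (\langle a^2 \rangle \setminus \{e\}) \cup \langle a \rangle b^2$ and $S_2 = \langle a \rangle b \cup \langle a \rangle b^3$. I would compute $|S_1| = (n-1) + 2n = 3n-1$ and $|S_2| = 2n + 2n = 4n$ (using that $\langle a \rangle$ has order $2n$), so $|S| = 7n - 1$, which matches the top eigenvalue $7n-1$ appearing in the spectrum displayed in Corollary \ref{cor2}. Since $T_{8n}$ has order $8n$, the underlying vertex set has $8n$ elements. Connectedness and integrality are both asserted directly in Corollary \ref{cor2}.

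Hence, for each positive integer $n$, the graph $\Cay(T_{8n}, S)$ of Corollary \ref{cor2} is a connected $(7n-1)$-regular integral graph on $8n$ vertices, proving the claim. There is essentially no obstacle here; the whole work has already been done in the previous corollary, and the present statement is a concise extraction of its content as an existence result for integral graphs of prescribed order and regularity.
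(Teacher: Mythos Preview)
Your proposal is correct and matches the paper's approach: the paper states this corollary without proof immediately after Corollary~\ref{cor2}, so it is understood to follow directly from that result exactly as you describe. Your computation $|S|=(n-1)+2n+2n+2n=7n-1$ and the observation that $\Cay(T_{8n},S)$ is $|S|$-regular on $8n$ vertices is precisely the missing one-line justification.
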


\section{Integral Cayley graphs over $T_{8n}$ by Boolean algebra of cyclic groups}

In this section, we study the integrality of Cayley graphs over $T_{8n}$ by using the Boolean algebra of cyclic groups.

\begin{lemma}\label{a2}
	Let $T \subseteq \langle a^2 \rangle \subseteq T_{8n} $ such that $T\in B(\langle a^2 \rangle )$ and $ T=T^{-1}$. Then $2\varphi_{k}(T^2) = \varphi_{k}^2(T)$ and $2\psi_{h}(T^2) = \psi_{h}^2(T)$  for $k,h = 1,2,\dots, n-1$.
\end{lemma}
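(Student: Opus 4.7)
The plan is to reduce the identity to a statement about linear characters of the cyclic group $\langle a^2\rangle \cong \mathbb{Z}_n$, by using that restriction of each $\varphi_k$ and each $\psi_h$ to $\langle a^2\rangle$ splits as a character plus its complex conjugate. The symmetry hypothesis $T=T^{-1}$ together with multiplicativity of linear characters then does the rest.

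First, I would read off from Tables \ref{Table1} and \ref{Table2} the fact that, on $\langle a^2\rangle$, the characters $\varphi_k$ and $\psi_h$ take the uniform shape
$$
\varphi_k(a^{2s}) = \zeta_k^{\,s}+\zeta_k^{-s},\qquad
\psi_h(a^{2s}) = \eta_h^{\,s}+\eta_h^{-s},
$$
for suitable $n$-th roots of unity $\zeta_k,\eta_h$. Concretely one may take $\zeta_k=\varepsilon^{2k}$ for odd $n$ and $\zeta_k=\xi^{2k}$ for even $n$, and $\eta_h=\xi^{2h}$ in either parity (the factors $(-1)^{rh+r}$ and $\mathbf{i}$ that appear elsewhere in the tables drop out, since $r$ is replaced by $2s$ and no $b$-component is present). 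Let $\tilde\chi_k,\tilde\psi_h$ denote the linear characters of $\langle a^2\rangle$ determined by $a^{2s}\mapsto\zeta_k^{\,s}$ and $a^{2s}\mapsto\eta_h^{\,s}$, so that $\varphi_k|_{\langle a^2\rangle}=\tilde\chi_k+\overline{\tilde\chi_k}$ and analogously for $\psi_h$.

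Next, I would invoke $T=T^{-1}$: for any linear character $\lambda$ of $\langle a^2\rangle$,
$$
\lambda(T^{-1})=\sum_{t\in T}\lambda(t^{-1})=\sum_{t\in T}\overline{\lambda(t)}=\overline{\lambda(T)},
$$
so $\lambda(T)\in\mathbb{R}$. Applying this to $\tilde\chi_k$ gives $\varphi_k(T)=\tilde\chi_k(T)+\overline{\tilde\chi_k(T)}=2\tilde\chi_k(T)$. Using the convention $\chi(AB)=\sum_{a\in A,b\in B}\chi(ab)$ together with multiplicativity of $\tilde\chi_k$,
$$
\tilde\chi_k(T^2)=\sum_{t_1,t_2\in T}\tilde\chi_k(t_1)\tilde\chi_k(t_2)=\tilde\chi_k(T)^2,
$$
whence $\varphi_k(T^2)=2\tilde\chi_k(T)^2$. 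Combining the two gives
$$
2\varphi_k(T^2)=4\tilde\chi_k(T)^2=\bigl(2\tilde\chi_k(T)\bigr)^2=\varphi_k^2(T).
$$
The identical argument with $\tilde\psi_h$ in place of $\tilde\chi_k$ yields $2\psi_h(T^2)=\psi_h^2(T)$.

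The only mildly tedious step is the initial case-analysis of the character tables to confirm the decomposition $\varphi_k|_{\langle a^2\rangle}=\tilde\chi_k+\overline{\tilde\chi_k}$ (and analogously for $\psi_h$) in all four cases (odd/even $n$, $\varphi$ vs.\ $\psi$); after that, everything is a two-line computation in the character ring. I note in passing that the hypothesis $T\in B(\langle a^2\rangle)$ is not used to obtain these two identities themselves; its role is to guarantee via Lemma \ref{BG} that the quantities $\tilde\chi_k(T)$ and $\tilde\psi_h(T)$ are in fact \emph{integers}, which will be what is needed when the lemma is fed into the Boolean-algebra integrality criterion in the next section.
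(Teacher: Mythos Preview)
Your proof is correct and follows essentially the same route as the paper's: both rest on the observation that on $\langle a^2\rangle$ each $\varphi_k$ and $\psi_h$ has the shape $\zeta^{s}+\zeta^{-s}$, and then use $T=T^{-1}$ to factor the double sum $\sum_{s,t}(\zeta^{s+t}+\zeta^{-(s+t)})$ as $\tfrac12\bigl(\sum_s(\zeta^s+\zeta^{-s})\bigr)^2$; you phrase this via linear characters of $\langle a^2\rangle$, while the paper simply expands the sums directly in the two parities. Your side remark that the hypothesis $T\in B(\langle a^2\rangle)$ is not actually needed for the identity is accurate---the paper's own argument does not invoke it either.
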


\begin{proof}
Suppose that $T = \left\{a^{2d}\mid d \in \Omega\right\}$,  where $\Omega = -\Omega$ is a set of integers. Then $T^2 = \left\{a^{2(s+t)} \mid s,t \in \Omega\right\}$. Consider the following two cases.
	
\noindent \emph{Case 1.} $n$ is odd. In this case,
		\begin{align*}
			2\varphi_{k}(T^2) &= 2 \sum\limits_{s,t\in \Omega}\varphi_{k}\left(a^{2(s+t)}\right) \\
			& = 2 \sum\limits_{s,t\in\Omega}\left(\varepsilon^{2(s+t)k}+\varepsilon^{-2(s+t)k}\right) \\
			& = \sum\limits_{s\in\Omega}\left(\varepsilon^{2sk} + \varepsilon^{-2sk}\right) \sum\limits_{t\in \Omega}\left(\varepsilon^{2tk} + \varepsilon^{-2tk}\right) \\
			& = \varphi_{k}^2(T),\\
		2\psi_{h}(T^2) &= 2 \sum\limits_{s,t\in \Omega}\psi_{h}\left(a^{2(s+t)}\right) \\
		& = 2 \sum\limits_{s,t\in\Omega}(-1)^{2(s+t)(h+1)}\left(\xi^{2(s+t)h}+\xi^{-2(s+t)h}\right) \\
	& =  \sum\limits_{s\in\Omega}\left(\xi^{2sk} + \xi^{-2sk}\right) \sum\limits_{t\in \Omega}\left(\xi^{2tk} + \varepsilon^{-2tk}\right) \\
		& = \psi_{h}^2(T).
	\end{align*}
\noindent \emph{Case 2.} $n$ is even. In this case,
\begin{align*}
		2\varphi_{k}(T^2) &= 2 \sum\limits_{s,t\in \Omega}\varphi_{k}\left(a^{2(s+t)}\right)  = 2 \sum\limits_{s,t\in\Omega}\left(\xi^{2(s+t)k}+\xi^{-2(s+t)k}\right)  = \varphi_{k}^2(T).
\end{align*}
Note that $\varphi_{k}(T)=\psi_{h}(T)$ if $k=h$. Then $ 2\psi_{h}(T^2) = \psi_{h}^2(T)$.

Therefore, $2\varphi_{k}(T^2) = \varphi_{k}^2(T)$ and $2\psi_{h}(T^2) = \psi_{h}^2(T)$ for $k,h = 1,2,\dots, n-1$.
\qed\end{proof}

	\begin{theorem}\label{boolean-oddeven}
		Let $S = S_{1}\cup S_{2}\subseteq T_{8n}\backslash \{e\}$ with $S = S^{-1}$, where $ S_{1}\subseteq \langle a^2 \rangle $ and $S_{2} \subseteq  \langle a  \rangle b \cup \langle a \rangle b^3$. Then $\Cay(T_{8n},S)$ is integral if and only if the following conditions hold for $ j = 0,1,\dots ,7$ and $ k,h = 1,2,\dots n-1$:
	\begin{itemize}
		\item[\rm (1)] $S_{1} \in B(\left\langle a^2 \right\rangle )$;
		\item[\rm (2)]  $\chi_{j}(S)$ and  $\varphi_{k}(S_{1})$ are integers;
		\item[\rm (3)] $2\varphi_{k}(S_{2}^2)$ and $2\psi_{h}(S_{2}^2)$ are perfect squares.
	\end{itemize}
	\end{theorem}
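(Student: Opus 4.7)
The plan is to deduce Theorem \ref{boolean-oddeven} from Theorem \ref{integrality} by exploiting that $S_1 \subseteq \langle a^2\rangle$ to rewrite character sums on $S_1$ as eigenvalues of the circulant $\Cay(\langle a^2\rangle, S_1)$, and then to collapse the discriminant conditions via Lemma \ref{a2}. Lemma \ref{BG} will convert the resulting circulant integrality into the Boolean-algebra condition.

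First I would set up a dictionary between the characters of $T_{8n}$ restricted to $S_1$ and the eigenvalues of $\Cay(\langle a^2\rangle, S_1)$ supplied by Lemma \ref{circulant}. Writing $S_1 = \{a^{2d_j}\}$ with index set $\{d_j\}$ symmetric modulo $n$, and setting $\mu_\ell = \sum_j \varepsilon^{\ell d_j}$ with $\varepsilon = \exp(2\pi \mathbf{i}/n)$, inspection of Tables \ref{Table1} and \ref{Table2} yields
\[
\varphi_k(S_1) = 2\mu_{\sigma(k)}, \qquad \psi_h(S_1) = 2\mu_h,
\]
where $\sigma$ is the bijection $k \mapsto 2k \pmod{n}$ on $\{1,\dots,n-1\}$ for odd $n$ (using $\gcd(2,n)=1$) and the identity for even $n$. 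In particular, integrality of every $\varphi_k(S_1)$ forces each rational $\mu_\ell$, being also an algebraic integer as a sum of roots of unity, to lie in $\mathbb{Z}$, which by Lemma \ref{BG} is equivalent to $S_1 \in B(\langle a^2 \rangle)$; and then $\psi_h(S_1) = 2\mu_h \in \mathbb{Z}$ comes along for free.

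For the \emph{only if} direction I would invoke Theorem \ref{integrality}: its conditions (1) and (2) give $\chi_j(S), \varphi_k(S_1) \in \mathbb{Z}$, and the dictionary upgrades $\varphi_k(S_1) \in \mathbb{Z}$ to $S_1 \in B(\langle a^2 \rangle)$, establishing our (1) and (2). Applying Lemma \ref{a2} to $S_1$ then yields $2\varphi_k(S_1^2) = \varphi_k^2(S_1)$ and $2\psi_h(S_1^2) = \psi_h^2(S_1)$; substituting these collapses the discriminants $\triangle_{\varphi_k}(S)$ and $\triangle_{\psi_h}(S)$ of Theorem \ref{integrality}(3),(5) to exactly $2\varphi_k(S_2^2)$ and $2\psi_h(S_2^2)$, so the perfect-square assertion transfers directly to our (3).

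For the \emph{if} direction I would run the same substitutions in reverse: our (3) together with Lemma \ref{a2} recovers Theorem \ref{integrality}(3),(5), while the dictionary combined with our (2) yields Theorem \ref{integrality}(1) and the $\varphi_k(S_1), \psi_h(S_1)$ halves of (2),(4). The remaining piece is integrality of $\varphi_k(S_1^2)+\varphi_k(S_2^2)$ and its $\psi$-analogue: by Lemma \ref{a2} and our (3), each summand equals half of an integer (namely $\varphi_k^2(S_1)/2$ and a perfect square divided by $2$), and each is simultaneously an algebraic integer as a sum of roots of unity, so each lies in $\mathbb{Z}$; their sum is then clearly integral. The main obstacle will be keeping the odd-$n$ and even-$n$ character tables straight while setting up the dictionary, since the underlying roots of unity ($\varepsilon$ versus $\xi$, with $\xi^2=\varepsilon$) and the index map $\sigma$ differ between the two parities; once that bookkeeping is clean, the remainder of the proof is a mechanical translation through Theorem \ref{integrality}.
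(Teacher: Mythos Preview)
Your proposal is correct and follows essentially the same route as the paper: both reduce to Theorem~\ref{integrality}, identify the character sums $\varphi_k(S_1)$ and $\psi_h(S_1)$ with (twice) the eigenvalues of the circulant $\Cay(\langle a^2\rangle,S_1)$ so that Lemma~\ref{BG} yields condition~(1), and then use Lemma~\ref{a2} to collapse the discriminants $\triangle_{\varphi_k}(S)$, $\triangle_{\psi_h}(S)$ to $2\varphi_k(S_2^2)$, $2\psi_h(S_2^2)$. The only cosmetic difference is that the paper passes to the circulant via $\psi_h(S_1)=2\rho_h(S_1)$ directly, whereas you go via $\varphi_k(S_1)=2\mu_{\sigma(k)}$ and the bijection $\sigma$; your explicit use of the algebraic-integer argument for $\varphi_k(S_2^2)$ is in fact slightly more careful than the paper's corresponding sentence.
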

	\begin{proof}
By Lemma \ref{circulant}, let $\rho_{l}:a^{2r} \mapsto  \xi^{2rl}$ be the irreducible characters of cyclic group $\langle a^2 \rangle $, where $\xi=\textup{exp}(\frac{\pi}{n}\mathbf{i})$ and $l,r = 0,1, \dots , n-1$. Assume that $S_{1} = \left\{a^{2d}\mid d \in \Omega\right\}$, where $\Omega = -\Omega$ is a set of integers. Then $S_{1}^2 = \left\{a^{2(s+t)} \mid  s,t \in \Omega\right\}$ is inverse-closed. Thus, there exists a multiset $U=-U$ of integers such that $S_{1}^2 =\left\{a^{2u}\mid u \in U\right\}$. By Tables \ref{Table1} and \ref{Table2}, we have
		\begin{align}
\nonumber\psi_{h}\left(S_{1}\right) &= \sum\limits_{s\in \Omega}\psi_{h}\left(a^{2s}\right)  \\
\nonumber& = \begin{cases}
			 	\sum\limits_{s\in \Omega}\left(-1\right)^{2s(h+1)}\left(\xi^{2sh}+\xi^{-2sh}\right), & \text{ if $n$ is odd},  \\ 			 	\sum\limits_{s\in \Omega}\left(\xi^{2sh}+\xi^{-2sh}\right),  & \text{ if $n$ is even},
			 \end{cases} \\
\nonumber& =  2\sum\limits_{s \in \Omega}(\xi^{2sh}) \\
\label{S1Use1-2-1}& = 2\rho_{h}\left(S_{1}\right).
		\end{align}
Similarly, we have
\begin{equation}\label{S1Algeb1-1-1}
	 \psi_{h}\left(S_{1}^2\right) = 2\rho_{h}\left(S_{1}^2\right).
 \end{equation}

Suppose that $\Cay(T_{8n},S)$ is integral. By Theorem \ref{integrality}, $\chi_{j}(S)$, $\varphi_{k}(S_{1})$ and $\psi_{h}(S_{1})$ are integers, and then (2) holds.

Recall that $\psi_{h}(S_{1})~(h= 1,2, \ldots, n-1)$ are integers. By \eqref{S1Use1-2-1}, $\rho_{h}(S_{1})~(h= 1, 2, \ldots, n-1)$ are rational numbers. By  Lemma \ref{circulant}, $\rho_{h}(S_{1})~(h= 1, \ldots, n-1)$ are eigenvalues of $\Cay(\left\langle a^2 \right\rangle, S_{1})$, and then $\rho_{h}(S_{1})~(h= 1, \ldots, n-1)$ must be integers. Thus, $\Cay(\left\langle a^2 \right\rangle, S_{1})$ is an integral graph. By Lemma \ref{BG}, $S_{1} \in B(\left\langle a^2\right\rangle)$, yielding (1).

By Lemma \ref{a2}, we have $2\varphi_{k}(S_{1}^2)=\varphi_{k}^2(S_{1})$ and $2\psi_{h}(S_{1}^2)=\psi_{h}^2(S_{1})$. Then
\begin{eqnarray}
\label{3.11-use-1} \triangle_{\varphi_{k}}(S) = 2(\varphi_{k}(S_{1}^2)+\varphi_{k}(S_{2}^2))-\varphi_{k}^2(S_{1}) = 2\varphi_{k}(S_{2}^2),\\
\label{3.11-use-2} \triangle_{\psi_{h}}(S) = 2(\psi_{h}(S_{1}^2)+\psi_{h}(S_{2}^2))-\psi_{h}^2(S_{1}) = 2\psi_{h}(S_{2}^2).
\end{eqnarray}
By Theorem \ref{integrality}, $2\varphi_{k}(S_{2}^2)$ and $ 2\psi_{h}(S_{2}^2) $ must be perfect squares. So (3) holds.

Conversely, suppose that (1), (2) and (3) hold. By (1) and Lemma \ref{BG}, $\Cay(\left\langle a^2 \right\rangle,S_{1})$ is integral. By Lemma \ref{circulant} and \eqref{S1Use1-2-1},  $\rho_{h}(S_{1})$ and $\psi_{h}(S_{1}) = 2\rho_{h}(S_{1})$ are integers. Thus, Lemma \ref{a2} implies that $\psi_{h}(S_{1}^2)=\frac{1}{2}\psi_{h}^2(S_{1})$ is a rational number. Note \eqref{S1Algeb1-1-1} that $\psi_{h}(S_{1}^2)$ is an algebraic number. Then $\psi_{h}(S_{1}^2)$ must be an integer.

Similarly, note (2) that $\varphi_{k}(S_{1})$ is an integer. By Lemma \ref{a2}, $\varphi_{k}(S_{1}^2) = \frac{1}{2}\varphi_{k}^2(S_{1})$ is a rational number, which is also an algebraic integer. Thus,  $\varphi_{k}(S_{1}^2) $ is an integer.

By (3), \eqref{3.11-use-1} and \eqref{3.11-use-2}, $\triangle _{\varphi_{k}}(S)$ and $\triangle _{\psi_{h}}(S)$ are perfect squares, and then $\varphi_{k}(S_{2}^2)$ and $\psi_{h}(S_{2}^2)$ are integers. So $ \varphi_{k}(S_{1}^2) + \varphi_{k}(S_{2}^2) $ and $\psi_{h}(S_{1}^2) + \psi_{h}(S_{2}^2) $ are integers.

Therefore, by Theorem \ref{integrality}, $\Cay(T_{8n},S)$ is integral.
	\qed\end{proof}

	\begin{cor}\label{cor3}
		Let $S=S_{1}\cup S_{2}\subseteq T_{8n} \backslash \{e\}$ such that $S = S^{-1} $, where $S_{1}=\langle a^2 \rangle \backslash \{e\} $ and $S_{2} = \langle a \rangle b \cup \langle a  \rangle b^3$. Then $\Cay(T_{8n},S)$ is a connected integral graph, whose  spectrum is
		$$\Spec (\Cay(T_{8n}, S)) = \left\{[-3n-1],[5n-1],[n-1]^6,[-1]^{8(n-1)}\right\}.$$
	\end{cor}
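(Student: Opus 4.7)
The plan is to invoke Theorem \ref{boolean-oddeven} for integrality, and then read off the spectrum from the linear system \eqref{jkh} established in the proof of Theorem \ref{integrality}.

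Condition (1) of Theorem \ref{boolean-oddeven} is immediate since $S_1 = \langle a^2\rangle\setminus\{e\}$ is itself in the Boolean algebra $B(\langle a^2\rangle)$. For condition (2), I would use Tables \ref{Table1} and \ref{Table2} together with Lemmas \ref{2n-root} and \ref{n-root} to compute, uniformly in both parities of $n$,
\[
\chi_j(S_1) = \sum_{r=1}^{n-1}\chi_j(a^{2r}) = n-1 \qquad \text{and} \qquad \varphi_k(S_1) = -2,
\]
and then
\[
\chi_j(S_2) = \sum_{l=0}^{2n-1}\chi_j(a^{l}b) + \sum_{l=0}^{2n-1}\chi_j(a^{l}b^3),
\]
which collapses via geometric sums in $\omega = \exp(\tfrac{\pi}{4}\mathbf{i})$ to $4n$ at $j=0$, to $-4n$ at $j=4$, and to $0$ otherwise. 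Hence $\chi_j(S)$ and $\varphi_k(S_1)$ are integers. For condition (3), the key observation is that the $S_2$ here coincides with the $S_2$ of Corollary \ref{cor2}, so \eqref{S2square-odd}, \eqref{S2square-even} and the parallel $\psi$-computation from that proof yield $\varphi_k(S_2^2) = \psi_h(S_2^2) = 0$; thus $2\varphi_k(S_2^2) = 2\psi_h(S_2^2) = 0$ is a perfect square. Theorem \ref{boolean-oddeven} then gives integrality of $\Cay(T_{8n},S)$.

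For the spectrum, the linear characters contribute $\lambda_0 = 5n-1$, $\lambda_4 = -3n-1$, and $\lambda_j = n-1$ for the other six values of $j$. For each $k \in \{1,\dots,n-1\}$, Lemma \ref{a2} gives $\varphi_k(S_1^2) = \tfrac12\varphi_k^2(S_1) = 2$, so the quadratic \eqref{quadratic} for $(\mu_{k1},\mu_{k2})$ becomes $x^2 + 2x + 1 = 0$, forcing $\mu_{k1} = \mu_{k2} = -1$; because $\rho_{2k}$ has degree $2$, each $k$ contributes $4$ copies of the eigenvalue $-1$. The identical argument for $\psi_h$ contributes another $4$ copies per $h$, giving in total the claimed multiplicity $8(n-1)$ for $-1$. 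Connectedness is immediate: $b$ and $ab$ both lie in $S_2$, so $a = (ab)\cdot b^{-1} \in \langle S\rangle$ and hence $\langle S\rangle \supseteq \langle a,b\rangle = T_{8n}$.

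There is no substantive obstacle: the argument is essentially a direct application of Theorem \ref{boolean-oddeven}. The one step requiring care is the parity bookkeeping---both Tables \ref{Table1} and \ref{Table2} split the two-dimensional characters by the parity of $k$ (resp.\ $h$), and the linear characters in Table \ref{Table2} split by the parity of $j$, so one must check that the claimed values $\chi_j(S)\in\mathbb{Z}$, $\varphi_k(S_1) = -2$, and $\varphi_k(S_2^2) = \psi_h(S_2^2) = 0$ survive all these subcases uniformly, which they do.
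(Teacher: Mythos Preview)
Your proposal is correct and follows essentially the same route as the paper: both verify the three conditions of Theorem \ref{boolean-oddeven} (with $S_1\in B(\langle a^2\rangle)$ immediate, $\chi_j(S)$ and $\varphi_k(S_1)$ computed from the character tables, and $\varphi_k(S_2^2)=\psi_h(S_2^2)=0$ imported from \eqref{S2square-odd}--\eqref{S2square-even}), then use Lemma \ref{a2} and \eqref{jkh} to pin down the spectrum. Your decomposition $\chi_j(S)=\chi_j(S_1)+\chi_j(S_2)$ and your explicit accounting of the multiplicity $4$ per $k$ (and per $h$) are slightly more transparent than the paper's presentation, but the substance is identical.
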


\begin{proof}
Note that $\langle S \rangle = T_{8n}$. Then $\Cay(T_{8n},S)$ is connected. In the following, we verify that $\Cay(T_{8n}, S)$ is integral by Tables \ref{Table1}, \ref{Table2} and Theorem \ref{boolean-oddeven}. Notice that $S_{1} \in B( \langle a^2 \rangle ) $. Then
		\begin{align*}
			\chi_{j}(S) &= \chi_{j}(S_{1}) + \chi_{j}(S_{2}) \\
 &= \sum\limits_{r=1}^{n-1}\chi_{j}(a^{2r}) +  \sum\limits_{l=0}^{2n-1}\chi_{j}(a^{l}b) +\sum\limits_{l=0}^{2n-1}\chi_{j}(a^{l}b^3) \\
			& =\begin{cases}
				\sum\limits_{r=1}^{n-1}(-1)^{2rj}  + \sum\limits_{l=0}^{2n-1}(-1)^{lj}\omega^{j} + \sum\limits_{l=0}^{2n-1}(-1)^{lj}\omega^{3j}, & \text{if $n$ is odd and $j=0,1,\ldots,7$}, \\
				\sum\limits_{r=1}^{n-1}1  + \sum\limits_{l=0}^{2n-1}\omega^{j} + \sum\limits_{l=0}^{2n-1}\omega^{3j}, & \text{if $n$ is even and $ j = 0,2,4,6$}, \\
				\sum\limits_{r=1}^{n-1}(-1)^{2r}  + \sum\limits_{l=0}^{2n-1}(-1)^{l}\omega^{j-1} + \sum\limits_{l=0}^{2n-1}(-1)^{l}\omega^{3(j-1)} ,& \text{if $n$ is even and $ j = 1,3,5,7$},
			\end{cases} 	\\
			& = \begin{cases}
				n-1, &  j = 1,2,3,5,6,7, \\
				-3n-1, & j = 4, \\
				5n-1, & j = 0,
			\end{cases}
		\end{align*}
and
		\begin{align*}
			\varphi_{k}(S_{1}) &= \sum\limits_{r=1}^{n-1}(a^{2r}) \\
 &= \begin{cases}
				\sum\limits_{r=1}^{n-1}(\varepsilon^{2kr}+\varepsilon^{-2kr}),  & \text{if $n$ is odd}, \\
				\sum\limits_{r=1}^{n-1}(\xi^{2kr}+\xi^{-2kr}), & \text{if $n$ is even},
			\end{cases}\\
	&= \begin{cases}
		-2, &  \text{if $n$ is odd}, \\
		-2, & \text{if $n$ is even}.
	\end{cases}
		\end{align*}
By \eqref{S2square-odd} and \eqref{S2square-even}, we have	$\varphi_{k}(S_{2}^2) = 0$. Similar to \eqref{S2square-odd} and \eqref{S2square-even}, one can verify that $\psi_{h}(S_{2}^2) = 0 $. Then $2\varphi_{k}(S_{2}^2)$ and $2\psi_{h}(S_{2}^2)$ are perfect squares. Therefore, by Theorem \ref{boolean-oddeven}, $\Cay(T_{8n},S)$ is integral.

Finally, by Lemma \ref{a2}, we have $\varphi_{k}(S_{1}^2) = \frac{1}{2}\varphi_{k}^2(S_{1})=2$. Then $\varphi_{k}(S_{1}^2)+\varphi_{k}(S_{2}^2) = 2$. Similarly, one can verify that $\psi_{h}(S_{1})=-2$, $\psi_{h}(S_{1}^2)=\frac{1}{2}\psi_{h}^2(S_{1})=2$ and $\psi_{h}(S_{1}^2)+\psi_{h}(S_{2}^2)  = 2$.
Therefore, by \eqref{jkh}, we can determine the spectrum of $\Cay(T_{8n}, S)$ immediately.
	\qed\end{proof}

\begin{cor}
For each positive integer $n$, there is a connected $(5n-1)$-regular integral graph with $8n$ vertices.
	\end{cor}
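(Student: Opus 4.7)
The plan is simply to exhibit the Cayley graph constructed in Corollary \ref{cor3} as the desired witness. Set $S = S_1 \cup S_2$ with $S_1 = \langle a^2 \rangle \setminus \{e\}$ and $S_2 = \langle a \rangle b \cup \langle a \rangle b^3$, and consider $\Gamma = \Cay(T_{8n}, S)$.

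First I would verify the vertex count: since $|T_{8n}| = 8n$, the Cayley graph has exactly $8n$ vertices. Next I would verify the degree by counting $|S|$. Because $a$ has order $2n$, the subgroup $\langle a^2 \rangle$ has order $n$, so $|S_1| = n-1$. The cosets $\langle a \rangle b$ and $\langle a \rangle b^3$ lie in distinct cosets of $\langle a \rangle$ in $T_{8n}$ (recall from Lemma \ref{computations} that $T_{8n}$ decomposes as the disjoint union of $\langle a \rangle, \langle a \rangle b, \langle a \rangle b^2, \langle a \rangle b^3$, each of cardinality $2n$), so they are disjoint and together contain $4n$ elements, giving $|S_2| = 4n$ and $|S| = 5n-1$. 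Hence $\Gamma$ is $(5n-1)$-regular, which agrees with the fact that the largest eigenvalue listed in the spectrum of $\Gamma$ in Corollary \ref{cor3} is $5n-1$.

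Finally, connectedness and integrality of $\Gamma$ are precisely the conclusions of Corollary \ref{cor3}, so nothing further needs to be proved. The only pro forma check is that $S$ is inverse-closed so that $\Gamma$ is a well-defined undirected graph: $S_1$ is a subgroup minus the identity, hence inverse-closed, and by Lemma \ref{computations} we have $(a^l b)^{-1} = a^{l+n} b^3 \in \langle a \rangle b^3$ and $(a^l b^3)^{-1} = a^{l+n} b \in \langle a \rangle b$, so $S_2^{-1} = S_2$. There is no real obstacle here; the corollary is essentially a repackaging of Corollary \ref{cor3}, observing the number of vertices and the regularity.
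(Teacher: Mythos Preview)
Your proposal is correct and follows the same approach as the paper: the corollary is an immediate consequence of Corollary~\ref{cor3}, and the paper gives no separate proof. Your added verifications of the vertex count, the degree $|S|=5n-1$, and the inverse-closedness of $S$ are all accurate and make explicit what the paper leaves to the reader.
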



\section{Integral normal Cayley graphs over $T_{8n}$}

In this section, we characterize integral normal Cayley graphs over $T_{8n}$. Before proceeding, define
$$
[x] = \left\{y \mid y\in \langle x  \rangle ,  \langle y \rangle =  \langle x  \rangle \right\}
$$
and let $\overline{a}$ denote the conjugacy class of $a\in T_{8n}$. Table \ref{Order} gives the orders of elements in $T_{8n}$, where $\mathrm{gcd}(x,y)$ denotes the greatest common divisor of $x$ and $y$, $\mathrm{lcm}(x,y)$ denotes the least common multiple of $x$ and $y$.

\begin{table}[ht]
	\caption{\textbf{ Orders of Elements in $T_{8n}$ }}
	\label{Order}
	\centering
	\begin{tabular}{c|c|c}
		\hline
		Type & Elements & Orders   \\
		\hline
		I & $a^i$ & $\frac{2n}{\mathrm{gcd}\left(i,2n\right)}$  \\
		\hline
		II & $ a^ib^j $~($j = 2$) & $ \mathrm{lcm}\left(\frac{2n}{\mathrm{gcd}(i,2n)}, 4\right) $\\
		\hline
		III & $ a^ib^j $~($j = 1,3$)  &  8  \\
		\hline
	\end{tabular}
\end{table}

	\begin{theorem}\label{normal-integer}
		Let $S= S_{1}\cup S_{2} \subseteq T_{8n} \backslash \{e\}$ such that $S=S^{-1}$, where $S_{1} \subseteq \langle a \rangle \cup
		\langle a \rangle b^2$ and $S_{2} \subseteq \langle a \rangle b \cup \langle a \rangle b^3$. Then the normal Cayley graph $\Cay(T_{8n},S)$ is integral if and only if $S_1$ and $S_2$ satisfy the following conditions:
		\begin{itemize}
			\item [\rm (1)] $S_{1}$ is one of the following sets: $ \emptyset $, $[a^r] $, $[a^rb^2]$, or arbitrary finite unions of these sets, where
			 $[a^r] = \{a^l \in \langle a^r \rangle \mid \gcd(k,o(a^r))=1 \textup{~and~} l = kr \pmod {2n},  k \in \mathbb{N}^{+} \}$,
			$[a^rb^2] = \{a^lb^2 \in  \langle a^rb^2  \rangle \mid \textup{lcm}(o(a^r),4)=\textup{lcm}(o(a^l),4),~ l = r(4t+1) \pmod{2n} \textup{~or~} l = r(4t+3)+n \pmod{2n}, t \in \mathbb{N}^{+}\}$, $o(a^r)=\frac{2n}{\textup{gcd}(r,2n)}$ denotes the order of the element $a^r$ and $o(a^rb^2) = \textup{lcm}(o(r),4)$;
			\item [\rm (2)] $S_{2}$  is one of the following sets:
\begin{itemize}
  \item[$\bullet$] $\emptyset $ or $ \{a^rb,a^rb^3\mid r=0,1,\dots , 2n-1 \}$, if $n$ is odd;
  \item[$\bullet$] $\emptyset$, $ \{a^rb,a^rb^3\mid r=0,2,\dots , 2n-2 \}$, $ \{a^rb,a^rb^3\mid r=1,3,\dots , 2n-1 \}$
			or $ \{a^rb,a^rb^3\mid r=0,1,\dots, 2n-1 \}$, if $n$ is even.
\end{itemize}
		\end{itemize}
	\end{theorem}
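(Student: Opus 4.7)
The central lemma to invoke is Lemma \ref{digraphs}: a normal Cayley digraph is integral if and only if its connection set is power-closed. Since $\Cay(T_{8n},S)$ is assumed normal, $S$ is already a union of conjugacy classes of $T_{8n}$, and the hypothesis $S=S^{-1}$ is automatic from power-closedness (as $x^{-1}\in\langle x\rangle$ is always a generator of $\langle x\rangle$). So the proof reduces to enumerating, in building-block form, all unions of conjugacy classes which are power-closed, and then matching those against the decomposition $S=S_{1}\cup S_{2}$.

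A structural observation decouples $S_{1}$ and $S_{2}$. For any $x\in\langle a\rangle\cup\langle a\rangle b^{2}$, every power of $x$ lies in $\langle a\rangle\cup\langle a\rangle b^{2}$, since $b^{2k}\in\{e,b^{2},a^{n},a^{n}b^{2}\}$. For any $x\in\langle a\rangle b\cup\langle a\rangle b^{3}$, which has order~$8$ by Table \ref{Order}, a direct computation using $ba=a^{-1}b$ and $(a^{r}b)^{2}=b^{2}$ shows that the four generators of $\langle a^{r}b\rangle$ are exactly $a^{r}b,\ a^{r}b^{3},\ a^{r+n}b,\ a^{r+n}b^{3}$, all in $\langle a\rangle b\cup\langle a\rangle b^{3}$. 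Thus the power-closed condition on $S$ splits into independent conditions on $S_{1}$ and on $S_{2}$.

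For $S_{1}$ the generators of $\langle a^{r}\rangle$ are the powers $a^{kr}$ with $\gcd(k,o(a^{r}))=1$, giving the atom $[a^{r}]$; and for $x=a^{r}b^{2}$, Lemma \ref{computations} gives $(a^{r}b^{2})^{k}=a^{rk}b^{2k}$. Since $o(a^{r}b^{2})=\mathrm{lcm}(o(a^{r}),4)$ is divisible by~$4$ the coprime exponents $k$ must be odd, and splitting by $k\equiv 1,3\pmod 4$, together with $b^{6}=a^{n}b^{2}$, produces the two parametric families $l\equiv r(4t+1)$ and $l\equiv r(4t+3)+n\pmod{2n}$ that define $[a^{r}b^{2}]$. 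Both atoms are automatically conjugation-closed (conjugation fixes $a^{s}$ or sends it to $a^{-s}$, with analogous behaviour on $a^{s}b^{2}$), so condition~(1) is forced. The verification that any union of such atoms is genuinely power-closed is immediate since $a^{kr}$ and $a^{r}$ generate the same cyclic subgroup.

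For $S_{2}$, with the explicit generator set from the second paragraph, a parity split on $n$ finishes the problem. When $n$ is odd, $r$ and $r+n$ have opposite parities, so the power-closed hull of any element hits every Type~III conjugacy class of Lemma \ref{conjugacy}, forcing $S_{2}\in\{\emptyset,\langle a\rangle b\cup\langle a\rangle b^{3}\}$. When $n$ is even, $r$ and $r+n$ have the same parity and the hull remains within either $\{a^{2r}b,a^{2r}b^{3}\mid 0\le r\le n-1\}$ or $\{a^{2r+1}b,a^{2r+1}b^{3}\mid 0\le r\le n-1\}$, so these are two independent atoms and $S_{2}$ is one of the four listed unions. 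The trickiest step is the parametrisation of $[a^{r}b^{2}]$, where one has to track residues $k\pmod 4$ together with the twist by $a^{n}$ coming from $b^{6}$; once that is in place, the Type~III case split for $S_{2}$ is routine.
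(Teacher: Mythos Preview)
Your proposal is correct and follows essentially the same approach as the paper: both reduce via Lemma~\ref{digraphs} to enumerating power-closed unions of conjugacy classes, compute the generator sets of $\langle a^{r}\rangle$, $\langle a^{r}b^{2}\rangle$, and $\langle a^{r}b\rangle$ explicitly, and then split the $S_{2}$ analysis on the parity of $n$. Your presentation is slightly more structured in that you explicitly isolate the decoupling of $S_{1}$ and $S_{2}$ and verify that the atoms are conjugation-closed, whereas the paper leaves these implicit in its five-case analysis; but the underlying argument is the same.
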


		
\begin{proof}		
By Lemma \ref{digraphs}, $\Cay(T_{8n},S)$ is an integral normal Cayley graph if and only if $S$ is power-closed. In the following, we give all possible cases for $S$ to be power-closed.

\noindent\emph{Case 1.} If $a^r,a^rb^2 \notin S_{1}$, then $S_{1} = \emptyset$.

\noindent\emph{Case 2.} If $a^r \in S_{1}$, then all generators of the cyclic subgroup $\langle a^r \rangle $ must belong to $S_{1}$. Suppose that $\langle a^l \rangle = \langle a^r \rangle $. Since
$$\langle a^r \rangle = \left\{a^r,a^{2r},\dots,a^{o(a^r)}\right\},$$
we have $l = kr \pmod {2n} $ for $k \in \mathbb{N}^{+}$, and $o(a^l) = o(a^r)$. Then $\gcd(k,o(a^r))=1$ must hold, since $o(a^r) = \frac{2n}{\gcd(r,2n)}$.
So we obtain $[a^r]$ as show in (1).

\noindent\emph{Case 3.} If $a^rb^2 \in S_{1}$, then all generators of the cyclic subgroup $\langle a^rb^2 \rangle $ must belong to $S_{1}$ and $\langle a^l \rangle \neq \langle a^rb^2 \rangle $ for all $a^l \in \langle a \rangle $. By Lemma \ref{computations}, we have $a^rb^2 = b^2a^r$ and $(a^rb^2)^k = a^{rk}b^{2k}$. Assume that $\langle a^lb^2 \rangle = \langle a^rb^2 \rangle $. Then, there exists $k \in \mathbb{N}^{+}$ such that $(a^rb^2)^k = a^{rk}b^{2k} = a^lb^2$. This implies that exactly one of the following must hold:
$$
\begin{cases}
	b^{2k} = b^2, \\
	l \equiv rk \pmod {2n},
\end{cases}\text{or~~}
\begin{cases}
	b^{2k} = a^nb^2, \\
	l \equiv rk+n \pmod {2n}.
\end{cases}
$$
Consequently, we have $2k = 8t+2$ or $2k=8t+4+2$ for $t \in \mathbb{N}^{+}$. Then,
$$ l\equiv r(4t+1) \pmod {2n} ~~\text{or}~~ l \equiv r(4t+3) + n \pmod {2n}$$
holds for $t \in \mathbb{N}^{+}$.
Finally, to make $o(a^lb^2) = o(a^rb^2)$, $\text{lcm}(o(a^r),4)=\text{lcm}(o(a^l),4)$ is necessary, since
$$o(a^rb^2) = \text{lcm}(\frac{2n}{\gcd(r,2n)},4) = \text{lcm}(o(r),4).$$
So we obtain $[a^rb^2]$ as show in (1).

\noindent\emph{Case 4.} If $a^rb,a^rb^3 \notin S_{2}$, then $S_{2} = \emptyset$.

\noindent\emph{Case 5.} $a^rb \in S_{2}$ or $a^rb^3 \in S_{2}$.
Note that
$$ \langle a^rb \rangle = \left\{a^rb, b^2, a^rb^3, a^n, a^{n+r}b, a^nb^2, a^{n+r}b^3, e\right\} = \langle a^rb^3 \rangle = \langle a^{n+r}b \rangle =  \langle a^{n+r}b^3 \rangle , $$
$$ \langle b^2 \rangle = \left\{b^2,a^n,a^nb^2,e\right\} =\langle a^nb^2 \rangle \neq \langle a^rb \rangle ,$$
$$ \langle a^n \rangle = \left\{a^n,e\right\}  \neq \langle a^rb \rangle.$$
Then $a^rb$, $a^rb^3,a^{n+r}b,a^{n+r}b^3\in S_{2}$, since $S_{2}$ is power-closed.

If $n$ is odd, by Lemma \ref{conjugacy},
$$\left\{a^rb, a^rb^3\mid r = 0,1,\dots,2n-1\right\} = \langle a\rangle b\cup \langle a \rangle b^3$$
is the unique union of conjugacy classes containing $a^rb$, $a^rb^3$, $a^{r+n}b$ and $a^{r+n}b^3$, which is power-closed.

If $n$ is even, by Lemma \ref{conjugacy}, one can easily verify that
$$ \overline{a^rb} \cup \overline{a^rb^3} \cup \overline{a^{r+n}b} \cup \overline{a^{r+n}b^3} = \left\{a^rb,a^rb^3\mid r=0,2,\dots , 2n-2\right\} $$
and
$$ \overline{a^rb} \cup \overline{a^rb^3} \cup \overline{a^{r+n}b} \cup \overline{a^{r+n}b^3} = \left\{a^rb,a^rb^3\mid r=1,3,\dots , 2n-1\right\}$$
are two unions of conjugacy classes containing $a^rb$, $a^rb^3$, $a^{r+n}b$ and $a^{r+n}b^3$, which are power-closed. Clearly, the union of such two sets is also power-closed.

This completes the proof.
	\qed\end{proof}

	\begin{cor}\label{cor4}
		Let $S= S_{1}\cup S_{2}\subseteq T_{8n} \backslash \{e\}$ such that $S=S^{-1}$, where $S_{1} = \{b^2,a^nb^2\}$ and $S_{2} = \langle a \rangle b \cup \langle a \rangle b^3$. Then the normal Cayley graph $\Cay(T_{8n},S)$ is a connected  integral graph, whose spectrum is
$$
\Spec(\Cay(T_{8n},S)) = \left\{[-4n+2],[4n+2],[2]^{2n-2},[-2]^{2n},[0]^{4n}\right\}.
$$
	\end{cor}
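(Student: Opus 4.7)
The plan is to handle three things in sequence: connectivity, integrality/normality via Theorem~\ref{normal-integer}, and the explicit spectrum via Lemma~\ref{spetrum}. Connectivity is immediate, since $b=a^{0}b$ and $ab$ both lie in $S_{2}$, giving $a=(ab)b^{-1}\in\langle S\rangle$, hence $\langle S\rangle=\langle a,b\rangle=T_{8n}$.

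For integrality and normality I would check that $(S_{1},S_{2})$ is one of the admissible pairs in Theorem~\ref{normal-integer}. Taking $r=0$ in the definition of $[a^{r}b^{2}]$ forces $l\equiv 0\pmod{2n}$ or $l\equiv n\pmod{2n}$, yielding precisely $b^{2}$ and $a^{n}b^{2}$; both have order $4$, so the $\mathrm{lcm}$-condition is met, and $[b^{2}]=\{b^{2},a^{n}b^{2}\}=S_{1}$. The set $S_{2}=\langle a\rangle b\cup\langle a\rangle b^{3}$ is explicitly on the list of allowed $S_{2}$'s for both parities of $n$, so Theorem~\ref{normal-integer} applies and $\Cay(T_{8n},S)$ is a normal integral Cayley graph.

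For the spectrum I would appeal to the system~\eqref{jkh}. Since $\varphi_{k},\psi_{h}$ vanish on $\langle a\rangle b\cup\langle a\rangle b^{3}$, one has $\varphi_{k}(S)=\varphi_{k}(S_{1})$ and $\psi_{h}(S)=\psi_{h}(S_{1})$. Next, $S_{2}^{2}$ expands as four sums in which the index $r_{1}-r_{2}$ ranges freely over $\mathbb{Z}_{2n}$, so each piece factors as $\bigl(\sum_{r=0}^{2n-1}\zeta^{kr}\bigr)\bigl(\sum_{r=0}^{2n-1}\zeta^{-kr}\bigr)=0$ for the relevant root of unity $\zeta\in\{\varepsilon^{\pm 1},\xi^{\pm 1}\}$ with $1\le k\le n-1$; this gives $\varphi_{k}(S_{2}^{2})=\psi_{h}(S_{2}^{2})=0$. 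A direct computation on the four-element multiset $S_{1}^{2}=\{e,e,a^{n},a^{n}\}$, using $\varepsilon^{kn}=1$ in the odd case and $\xi^{kn}=(-1)^{k}$ in the even case, then produces $\varphi_{k}(S_{1}^{2})=\tfrac{1}{2}\varphi_{k}(S_{1})^{2}$ and $\psi_{h}(S_{1}^{2})=\tfrac{1}{2}\psi_{h}(S_{1})^{2}$. The quadratic~\eqref{quadratic} therefore has a double root $\mu_{k1}=\mu_{k2}=\tfrac{1}{2}\varphi_{k}(S_{1})$, and similarly $\eta_{h1}=\eta_{h2}=\tfrac{1}{2}\psi_{h}(S_{1})$; reading these off Tables~\ref{Table1}--\ref{Table2} gives values in $\{-2,0,2\}$ according to the parities of $n$, $k$, and $h$.

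The remaining step, and the only real obstacle, is the multiplicity tally. I would compute $\chi_{j}(S)$ for $j=0,\dots,7$ (separately for odd and even $n$), obtaining $4n+2$ at $j=0$, $-4n+2$ at $j=4$, and six further contributions from the remaining linear characters. Combining these with the multiplicity-$4$ contribution of each $\mu_{k}$ and $\eta_{h}$ (each eigenvalue appears twice through the double root and twice through the character's degree), one checks that in either parity case the $+2$'s sum to multiplicity $2n-2$, the $-2$'s to $2n$, and the zeros to $4n$, which matches the claimed spectrum $\{[-4n+2],[4n+2],[2]^{2n-2},[-2]^{2n},[0]^{4n}\}$. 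The delicate point is that the even-$n$ and odd-$n$ accountings come from different character tables and different parity splits of $k$ and $h$, yet must collapse to the same totals; the computation is routine but requires careful case bookkeeping.
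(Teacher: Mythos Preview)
Your proposal is correct and follows essentially the same route as the paper: connectivity via $\langle S\rangle=T_{8n}$, integrality and normality via Theorem~\ref{normal-integer} after identifying $S_{1}=[b^{2}]$, and the spectrum via the system~\eqref{jkh} together with the vanishing of $\varphi_{k}(S_{2}^{2})$ and $\psi_{h}(S_{2}^{2})$ established in~\eqref{S2square-odd}--\eqref{S2square-even}. The only cosmetic difference is that you phrase the degree-$2$ contributions as a double root of~\eqref{quadratic}, whereas the paper records $\varphi_{k}(S_{1})$, $\psi_{h}(S_{1})$, $\varphi_{k}(S_{1}^{2})+\varphi_{k}(S_{2}^{2})$, $\psi_{h}(S_{1}^{2})+\psi_{h}(S_{2}^{2})$ directly and then reads off the eigenvalues; the multiplicity bookkeeping you outline is exactly what is needed in either presentation.
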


\begin{proof}
Note that $\langle S \rangle = T_{8n}$. Then $\Cay(T_{8n},S)$ is connected. Notice that $S_{1} = \left\{b^2, a^nb^2\right\}$, $\langle b^2 \rangle = \left\{b^2 , a^n , a^nb^2, e\right\}$, where $n = 0(4t+3)+n$ and $\text{lcm}(o(a^0),4) = \text{lcm}(o(a^n),4)$, so $[b^2] = \{b^2,a^nb^2\}$. Notice also that $S_{2} = \langle a \rangle b \cup \langle a \rangle b^2 = \{a^rb,a^rb^3 \mid r = 0,1,\ldots,2n-1\}$. Thus, by Theorem \ref{normal-integer}, $\Cay(T_{8n},S)$ is integral.
		
		Finally, by Tables \ref{Table1}, \ref{Table2}, \eqref{S2square-odd} and \eqref{S2square-even}, we have
		\begin{align*}
			\lambda_{j} &= \begin{cases} \omega^{2j}+\left(-1\right)^j\omega^{2j} + \sum\limits_{r=0}^{2n-1}\left(-1\right)^{rj}\omega^j + \sum\limits_{r=0}^{2n-1}\left(-1\right)^{rj}\omega^{3j}, & \text{if $n$ is odd and $j=0,1,\ldots,7$,} \\
			2\omega^{2j}  +	\sum\limits_{r=0}^{2n-1}\omega^j + 	\sum\limits_{r=0}^{2n-1}\omega^{3j},  &  \text{if $n$ is even  and $j = 0,2,4,6$,} \\
			2\omega^{2(j-1)}  +	\sum\limits_{r=0}^{2n-1}(-1)^r\omega^{(j-1)} + 	\sum\limits_{r=0}^{2n-1}(-1)^r\omega^{3(j-1)},  & \text{if $n$ is even and $j = 1,3,5,7$,}  \\
				\end{cases} \\
			&= \begin{cases}
				\begin{cases}
					0 ,& j = 1,3,5,7, \\
					-2, & j =2,6,  \\
					4n+2 ,& j =0, \\
					-4n+2 ,& j = 4,
				\end{cases}  &   \text{if  $n$ is odd}, \\
			\begin{cases}
				-2 ,& j =2,3,6,7, \\
				2,& j = 1,5 ,\\
				4n+2 ,& j =0, \\
				-4n+2 ,& j = 4,
			\end{cases}   &   \text{if $n$ is even},
			\end{cases}
		\end{align*}
	\begin{align*}
	\varphi_{k}(S_{1}) &=\varphi_{k}(b^2) + \varphi_{k}(a^nb^2) \\
		&= \begin{cases}
			(-1)^k\left(\varepsilon^{0k}+\varepsilon^{-0k}\right) + (-1)^k\left(\varepsilon^{nk}+\varepsilon^{-nk}\right),   & \text{if $n$ is odd and $k=1,2,\ldots,n-1$},  \\
			(\xi^{0k}+\xi^{-0k}) + (\xi^{nk}+\xi^{-nk}), & \text{if $n$ is even and $k = 2,4,\dots, n-2$},  \\
			\mathbf{i}(\xi^{0k}+\xi^{-0k}) + \mathbf{i}(\xi^{nk}+\xi^{-nk}), & \text{if $n$ is even and $ k = 1,3,\dots, n-1$},
		\end{cases}  \\
&=	\begin{cases}
		 4(-1)^k,  & \text{if $n$ is odd and $k=1,2,\ldots,n-1$},  \\
		4, & \text{if $n$ is even and $k = 2,4,\dots, n-2$}, \\
		0, & \text{if $n$ is even and $k = 1,3,\dots, n-1$},
	\end{cases}
	\end{align*}
\begin{align*}
\varphi_{k}(S_{1}^2) + \varphi_{k}(S_{2}^2)  & = 2\varphi_{k}(a^n) + 2\varphi_{k}(e) + \varphi_{k}(S_{2}^2) \\
	& = \begin{cases}
		2\left(\varepsilon^{nk}+\varepsilon^{-nk}\right) + 2\times2 + 0, & \text{if $n$ is odd}, \\
			2\left(\xi^{nk}+\xi^{-nk}\right) + 2\times2 + 0, & \text{if $n$ is even}, \\
	\end{cases} \\
	& = \begin{cases}
	8,	& \text{if $n$ is odd and $k=1,2,\ldots,n-1$}, \\
	8, & \text{if $n$ is even and $k = 2,4,\dots, n-2$}, \\
	0, & \text{if $n$ is even and $k = 1,3,\dots, n-1$}.
	\end{cases}
\end{align*}
Similarly, we have
	\begin{align*}
		\psi_{h}(S_{1}) &=  \psi_{h}(b^2) + \psi_{h}(a^nb^2) \\
& =
		\begin{cases}
			0, & \text{if $n$ is odd and $h=1,2,\ldots,n-1$}, \\
			-4 , & \text{if $n$ is even and $h = 2,4,\dots, n-2$}, \\
			0, &\text{if $n$ is even and $h = 1,3,\dots, n-1$},
		\end{cases}
	\end{align*}
	\begin{align*}
		\psi_{h}(S_{1}^2) + \psi_{h}(S_{2}^2)  & = 2\psi_{h}(a^n) + 2\psi_{h}(e) + \psi_{h}(S_{2}^2) \\
		& = \begin{cases}
			0,	& \text{if $n$ is odd and $h=1,2,\ldots,n-1$}, \\
			8,  & \text{if $n$ is even and $h = 2,4,\dots, n-2$}, \\
			0, &\text{if $n$ is even and $h = 1,3,\dots, n-1$}.
		\end{cases}
	\end{align*}
Therefore, by \eqref{jkh}, we can determine the spectrum of $\Cay(T_{8n}, S)$ immediately.
	\qed\end{proof}
 \begin{cor}
 	For each positive integer $n$, there is a connected $(4n+2)$-regular integral graph with $8n$ vertices.
 	
 \end{cor}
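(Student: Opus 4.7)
The plan is to read this corollary as an immediate existence statement extracted from the previous Corollary \ref{cor4}: the explicit family constructed there already supplies, for every positive integer $n$, a concrete graph with the required properties. So the proof will consist of checking the three claims (connected, integral, $(4n+2)$-regular on $8n$ vertices) against the construction in Corollary \ref{cor4}.

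First, I would take $S = S_1 \cup S_2 \subseteq T_{8n}\setminus\{e\}$ with $S_1 = \{b^2, a^n b^2\}$ and $S_2 = \langle a\rangle b \cup \langle a\rangle b^3$, exactly as in Corollary \ref{cor4}. Since $|T_{8n}| = 8n$, the Cayley graph $\Cay(T_{8n}, S)$ has $8n$ vertices. Connectedness and integrality are given directly by Corollary \ref{cor4}; in particular, the spectrum
$$
\Spec(\Cay(T_{8n}, S)) = \bigl\{[-4n+2],\,[4n+2],\,[2]^{2n-2},\,[-2]^{2n},\,[0]^{4n}\bigr\}
$$
consists entirely of integers, so $\Cay(T_{8n}, S)$ is integral.

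Next, I would verify the regularity. Since a Cayley graph $\Cay(G,S)$ is $|S|$-regular, it suffices to compute $|S|$. Clearly $|S_1| = 2$, and since $\langle a\rangle b$ and $\langle a\rangle b^3$ are disjoint cosets each of size $2n$, we have $|S_2| = 4n$; hence $|S| = 4n+2$. Alternatively, the regularity can be read off the spectrum as the largest eigenvalue $4n+2$, which agrees. Thus $\Cay(T_{8n}, S)$ is a connected $(4n+2)$-regular integral graph on $8n$ vertices, and the corollary is proved.

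There is essentially no obstacle: the hard analytical work (verifying the power-closed condition and evaluating the character sums to get the full spectrum) was already done in the proofs of Theorem \ref{normal-integer} and Corollary \ref{cor4}. The present corollary is a one-line packaging of that result, simply isolating existence and the parameters $(8n, 4n+2)$.
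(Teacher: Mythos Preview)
Your proposal is correct and matches the paper's approach exactly: the paper states this corollary without proof, treating it as an immediate consequence of Corollary~\ref{cor4}, and your argument simply makes that deduction explicit by reading off the vertex count, regularity (via $|S|=4n+2$ or the top eigenvalue), connectedness, and integrality from the construction there.
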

\section{Conclusion}
In this paper, we characterize integral Cayley graphs over $T_{8n}$. We first give all the irreducible characters of $T_{8n}$. Then we obtain a necessary and sufficient condition for $\Cay(T_{8n},S)$ to be integral by using the irreducible characters of $T_{8n}$. We also give a necessary and sufficient condition for $\Cay(T_{8n},S)$ to be integral by using the boolean algebra of cyclic groups. Finally, we characterize some integral normal Cayley graphs over $T_{8n}$. Moreover, we also determine some families of connected integral Cayley graphs over $T_{8n}$. As mentioned in the introduction, recognizing and constructing integral graphs is an important research topic in spectral graph theory. So we propose the following problem: \textbf{Characterize integral Cayley graphs over other kinds of non-abelian groups}.

\end{document}